\newtheorem{theorem}{Theorem}[section]
\newtheorem{lemma}[theorem]{Lemma}
\newtheorem{remark}[theorem]{Remark}
\newtheorem{proposition}[theorem]{Proposition}
\newtheorem{corollary}[theorem]{Corollary}
\newcommand{\F}{\boldsymbol{F}}
\newcommand{\FF}{\overline{\boldsymbol{F}}}
\newcommand{\TT}{\boldsymbol{\mathcal{T}} }
\newcommand{\VV}{\boldsymbol{\mathcal{V}} }
\DeclareMathOperator{\calbV} {\boldsymbol{\mathcal V}}
\definecolor{darkgreen}{rgb}{0.03, 0.5, 0.03}
\newcommand{\co} {\boldsymbol c}
\newcommand{\Spec} {\mbox{\rm Spec}}
\newcommand{\QMax} {\mbox{\rm QMax}}
\newcommand{\QSpec} {\mbox{\rm QSpec}}
\newcommand{\Fb}{\boldsymbol{\overline{F}}}
\newcommand{\f}{\boldsymbol{{f}}}
\newcommand{\stf} {\star{_{\!{_f}}}}
\newcommand{\stt} {\widetilde{\star}}
\newcommand{\bo}{\bigstar_0}
\newcommand{\ba}{\bigstar}
\newcommand{\sus}{\blacktriangle^{\!\star}}
\newcommand{\susT}{\blacktriangle^{\!\star}_T}
\newcommand{\susK}{\blacktriangle^{\!\star}_K}
\newcommand{\sust}{\blacktriangle^{\!\stt}}
\newcommand{\susf}{\blacktriangle^{\!\stf}}
\newcommand{\Bast} {\Large\mbox{$\boldsymbol\ast$}}
\newcommand{\bast} {\large\mbox{$\boldsymbol\ast$}}
\begin{document}
\title[]{Polynomial extensions of semistar operations}

\author{Gyu Whan Chang, Marco Fontana, and Mi Hee Park}

\thanks{\it MSC2010. \rm 13A15, 13B25,  13G05, 13B22, 13A18}
\thanks{\it Key words. \rm Semistar operation;  finite type; stable; strict
extension; $d$--, $b$--, $v$--, $t$--, $w$--operation}

\thanks{\it Acknowledgments. \rm
During the preparation of this paper,  the first-named author
was supported by the University of Incheon Research Grant in 2011
(Grant No. 2011-0288),  the  second-named author was  partially
supported by  a research grant PRIN-MiUR  2008WYH9NY\_003, and the third-named
author was supported by Basic Science Research Program through the
National Research Foundation of Korea (NRF) funded by the Ministry
of Education, Science and Technology (2010-0021883).}
\address{Department of Mathematics, University of Incheon,
Incheon 406-772, Korea}
\email{whan@incheon.ac.kr}
\address{Dipartimento di Matematica, Universit\`a degli Studi
``Roma Tre'', 00146 Rome, Italy.}
\email{fontana@mat.uniroma3.it }
\address{Department of Mathematics, Chung-Ang University,
Seoul 156-756, Korea}
\email{mhpark@cau.ac.kr}


\maketitle

\begin{abstract}
We provide a complete solution to the problem of extending
arbitrary semistar operations of an integral domain $D$ to
semistar operations of the polynomial ring $D[X]$.  As an
application, we show that one can reobtain the main results of the
papers \cite{CF} and \cite{CF2011} concerning the problem in the
special cases of stable semistar operations of finite type or
semistar operations defined by families of overrings. Finally, we
investigate the behavior of the polynomial extensions of the most
important and classical operations such as $d_D$, $v_D$, $t_D$,
$w_D$ and $b_D$ operations.
\end{abstract}

\bigskip

\section{Preliminaries}

Let $D$ be an integral domain with quotient field $K$. Let
$\boldsymbol{\overline{F}}(D)$ denote the set of all nonzero
$D$-submodules of $K$ and let $\boldsymbol{F}(D)$ be the set of
all nonzero fractional ideals of $D$, i.e., $E \in
\boldsymbol{F}(D)$ if $E \in \boldsymbol{\overline{F}}(D)$ and
there exists a nonzero $d \in D$ with $dE \subseteq D$. Let
$\f(D)$ be the set of all nonzero finitely generated
$D$-submodules of $K$. Then, obviously, $\f(D) \subseteq
\boldsymbol{F}(D) \subseteq \boldsymbol{\overline{F}}(D)$.

Following Okabe-Matsuda \cite{o-m}, a \emph{semistar operation} of
$D$ is a map $\star: \boldsymbol{\overline{F}}(D) \to
\boldsymbol{\overline{F}}(D),\ E \mapsto E^\star$, such that, for
all $x \in K\setminus \{0\}$ and for all $E,F \in
\boldsymbol{\overline{F}}(D)$, the following properties hold:
\begin{enumerate}
\item[$(\star_1)$] $(xE)^\star=xE^\star$; \item[$(\star_2)$] $E
\subseteq F$ implies $E^\star \subseteq F^\star$;
\item[$(\star_3)$] $E \subseteq E^\star$ and $E^{\star \star} :=
\left(E^\star \right)^\star=E^\star$.
\end{enumerate}

A \emph{(semi)star operation} is a semistar operation that,
restricted to $\boldsymbol{F}(D)$,  is a star operation (in the
sense of \cite[Section 32]{G}). It is easy to see that a
semistar operation $\star$ of $D$ is a (semi)star operation if and
only if $D^\star = D$.

If $\star$ is a semistar operation of $D$, then we can consider a
map\ $\star_{\!_f}: \boldsymbol{\overline{F}}(D) \to
\boldsymbol{\overline{F}}(D)$ defined as follows:
$$E^{\star_{\!_f}}:=\bigcup \{F^\star\mid F \in \boldsymbol{f}(D)
\mbox{ and } F \subseteq E\}  \; \mbox{ for each } E \in
\boldsymbol{\overline{F}}(D).$$

\noindent It is easy to see that $\star_{\!_f}$ is a semistar
operation of $D$, which is called \emph{the semistar operation of
finite type associated to $\star$}.  Note that, for each $F \in
\boldsymbol{f}(D)$, $F^\star=F^{\star_{\!_f}}$.  A semistar
operation $\star$ is called a \emph{semistar operation of finite
type} if $\star=\star_{\!_f}$.

If $\star_1$ and $\star_2$ are two semistar operations of $D$ such
that $E^{\star_1} \subseteq E^{\star_2}$ for all $E \in \FF(D)$,
then we say that $\star_1 \leq \star_2$. This is equivalent to say
that $\left(E^{\star_{1}}\right)^{\star_{2}} = E^{\star_2}=
\left(E^{\star_{2}}\right)^{\star_{1}}$ for each $E \in \Fb(D)$.
Obviously, for each semistar operation $\star$ of $D$, we have
$\star_{\!_f} \leq \star$. Let $d_D$ (or, simply, $d$) be the {\it
identity semistar operation of $D$}; clearly $d \leq \star$ for
all semistar operations $\star$ of $D$. Let $e_D$ (or, simply,
$e$) be the {\it trivial semistar operation of $D$}, defined by
$E^e = K$ for each $E\in \FF(D)$; clearly $\star \leq e$ for all
semistar operations $\star$ of $D$.

Let $\star$ be a semistar operation of $D$. We say that a nonzero
integral ideal $I$ of $D$ is a \emph{quasi-$\star$-ideal} if
$I^\star \cap D = I$, a \emph{quasi-$\star$-prime  ideal} if it is
a prime quasi-$\star$-ideal, and a \emph{quasi-$\star$-maximal
ideal} if it is maximal in the set of all proper
quasi-$\star$-ideals. A quasi-$\star$-maximal ideal is  a prime
ideal. It is  easy to prove that each  proper quasi-$\star_{_{\!
f}}$-ideal is contained in a quasi-$\star_{_{\! f}}$-maximal
ideal.  More details can be found in \cite[page 4781]{FL}. We will
denote by $\QMax^{\star}(D)$  (respectively, $\QSpec^\star(D)$)
the set of all quasi-$\star$-maximal ideals  (respectively,
quasi-$\star$-prime ideals) of $D$. When $\star$ is a (semi)star
operation, the notion of  quasi-$\star$-ideal coincides with the
``classical'' notion of  {\it   integral $\star$-ideal} (i.e., a
nonzero  integral ideal $I$ such that $I^\star = I$).

If $\Delta$ is a set of prime ideals of $D$, then the semistar
operation $\star_\Delta$  defined by
$$ E^{\star_\Delta} := \bigcap \{ED_P \;|\;\, P \in \Delta\}\, \;
\textrm {  for each}    \; E \in \boldsymbol{\overline{F}}(D)\,
$$

\noindent is called {\it the spectral semistar operation of $D$
associated to } $\Delta$. A semistar operation $\star$ of $D$ is
called {\it a spectral semistar operation} if there exists a
subset $ \Delta$ of the prime spectrum of $D$, $\mbox{\rm
Spec}(D)$,  such that $\,\star = \star_\Delta\,$.

A semistar operation $\star$ is \emph{stable} if $(E \cap F)^\star
= E^\star \cap F^\star$ for each $E,F \in \Fb(D)$. Clearly,
spectral semistar operations are stable \cite[Lemma
4.1(3)]{FH2000}.

When $\star$ is a semistar operation of $D$ and $\Delta :=
\QMax^{\star_{_{\! f}}}(D)$, we set $\stt:= \star_{\Delta}$, i.e.,
$$E^{\stt} := \bigcap \left \{ED_P \mid P \in
\QMax^{\star_{_{\! f}}}(D) \right\}  \;  \mbox{ for each } E \in
\boldsymbol{\overline{F}}(D),$$ \noindent or equivalently,
$$E^{\stt} = \bigcup \{(E:J) \mid J \in \f(D),\ J
\subseteq D, \mbox{ and } J^\star =D^\star \} \; \mbox{ for each }
E \in \boldsymbol{\overline{F}}(D).$$ \noindent Then $\stt\,$  is
a stable semistar operation of finite type, which is called
\emph{the stable semistar operation of finite type associated to
$\star$}. 
It is known that if a semistar operation $\star$ is stable and of
finite type, then $\star = \stt\,$ \cite[Corollary
3.9(2)]{FH2000}.

By $v_D$ (or, simply, by $v$) we denote  the $v$--semistar
operation of $D$ defined as usual, that is, $E^v := (D:(D:E))$ for
each $E\in \F(D)$ and $E^v:=K$ for each $E\in \FF(D)\setminus
\F(D)$. By $t_D$ (or, simply, by $t$) we denote  $(v_D)_{_{\! f}}$
the semistar operation of finite type associated to $v_D$ and by
$w_D$ (or just by $w$) the stable semistar operation of finite
type associated to $v_D$ (or, equivalently, to $t_D$), considered
by F.G.  Wang  and R.L. McCasland in \cite{WMc97} (cf. also
\cite{gv});  i.e.,  $w_D := \widetilde{\ v_D} = \widetilde{\
t_D}$.  Clearly $w_D\leq t_D \leq v_D$.  Moreover, it is easy to
see that for each   (semi)star operation $\star$ of $D$, we have
$\star \leq v_D$, $\stf \leq t_D$, and $\stt\leq w_D$ (cf. also
\cite[Theorem 34.1(4)]{G}).

Let $\star$ be a semistar operation of $D$   and let $F\in
\boldsymbol{f}(D)$. We say that $F$ is {\it
$\star$--\texttt{eab}} (respectively, {\it $\star$--\texttt{ab}})
if,   for all $G,\ H \in \boldsymbol{f}(D)$ (respectively, for
all $G,\ H \in \overline {\boldsymbol{F}}(D)$),  $(FG)^{\star}
\subseteq (FH)^{\star}$ implies $G^{\star}\subseteq H^{\star}$.
The operation $\star$ is said to be {\it \texttt{eab}}
(respectively, {\it \texttt{ab}} ) if each $F\in
\boldsymbol{f}(D)$ is $\star$--\texttt{eab} (respectively,
$\star$--\texttt{ab}). \  An \texttt{ab} operation is obviously an
\texttt{eab} operation.

Using the fact that, given $F \in \f(D)$, $F$ is
$\star$--\texttt{eab} if and only if  $\left( (FH)^\star : F^\star
\right) = H^\star$ for each $H \in \boldsymbol{f}(D)$ \cite[Lemma
8]{FL:2009}, we can associate to each semistar operation
$\,\star\,$ of $D$  an \texttt{eab} semistar operation  $\,
\star_a\, $ of finite type, which is called {\it the \texttt{eab}
semistar operation associated to $\,\star\,$} and defined as
follows: for each $ F \in \boldsymbol{f}(D)$ and for each  $E \in
{\overline{\boldsymbol{F}}}(D)$,
$$
\begin {array} {rl}
F^{\star_a} :=& \hskip -5pt  \bigcup\{((FH)^\star:H^\star) \; \ |
\; \, H \in
\boldsymbol{f}(D)\}\,, \\
E^{\star_a} :=& \hskip -5pt  \bigcup\{F^{\star_a} \; | \; \, F
\subseteq E\,,\; F \in \boldsymbol{f}(D)\}
\end{array}
$$
\cite[Definition 4.4 and Proposition 4.5]{FL1}. \rm  The previous
construction, in the ideal systems setting, is essentially due to
P. Jaffard \cite{J} and {F. Halter-Koch} \cite{HK:1997},
\cite{HK}.
Obviously,  $(\star_{_{\!f}})_{a}= \star_{a}$. Moreover, when
$\star = \star_{_{\!f}}$, then $\star$ is \texttt{eab} if and only
if $\star = \star_{a}\,$ \cite[Proposition 4.5(5)]{FL1}.  We call
the semistar operation $ b_D := (d_D)_{a}$ {\it the
$b$--operation} of $D$.  It is easy to see that $b_D$ is a
(semi)star operation of $D$ if and only if $D$ is integrally
closed.

Given a family of semistar operations $\{\star_\lambda\mid \lambda
\in \Lambda\}$ of $D$, the semistar operation $\wedge
\star_\lambda$ of $D$ is defined by
$$
E^{\wedge \star_\lambda} := \bigcap  \{ E^{\star_\lambda} \mid
\lambda \in \Lambda \} \;\; \mbox{ for each } E \in \Fb(D).
$$

Let $\TT := \{T_{\lambda}  \mid \lambda \in \Lambda \}$ be a set
of overrings of $D$. We denote by $\star_{\{T_\lambda\}}$ the
semistar operation of $D$ defined by $E^{\star_{\{T_\lambda\}}} :=
ET_\lambda$ for each $E \in \FF(D)$ and by $\wedge_{\TT}$ the
semistar operation $\wedge \{\star_{\{T_\lambda\}} \mid \lambda
\in \Lambda\}$.

For a semistar operation  $\,\star\,$ of $\,D\,$, \ we say that a
valuation overring $\,V\,$ of $\,D\,$ is \it a $\star$--valuation
overring of $\,D\,$ \rm provided $\,F^\star \subseteq FV\,$ (or,
equivalently, $\,F^\star V =FV$) \ for each $\,F \in
\boldsymbol{f}(D)\,$. Let $\calbV(\star)$ be the family of all
$\star$--valuation overrings of $D$. Then the semistar operation
$\wedge_{\calbV(\star)}$ of $D$ is an \texttt{ab} semistar
operation \cite[page 2098]{FL:2009}; clearly,
$\wedge_{\calbV(\star)}= \wedge_{\calbV(\star_f)}$.
Note that
$$
\star_a =\wedge_{\calbV(\star)}\,,\; \mbox{ in particular, } \,
b_D  = \wedge_{\calbV(d_D)}\,.$$  This result follows from
\cite[Proposition 4.1(5)]{FL}.

\smallskip

We now consider the polynomial ring $D[X]$ over $D$. A semistar operation
$\ba$ of $D[X]$ is called an {\it extension} (respectively, a {\it
strict extension}) {\it of a semistar operation} $\star$   of $D$
if $ E^{\star} = E[X]^{\ba} \cap K$ (respectively, $ E^{\star}[X]
= E[X]^{\ba}$) for all $E \in \Fb(D)$.

Given a semistar operation $\ba$ of $D[X]$, set $E^{\bo} :=
(E[X])^{\bigstar} \cap K$ for each $E \in \FF(D)$. Then $\bo$ is a
semistar operation of $D$ and $\ba$ is an extension of $\bo$. By
\cite[Lemma 5]{CF2011}, $({\ba}_{_{\! f}})_0 = (\bo)_{_{\! f}}$
and $(\widetilde{\ba})_0 =\widetilde{\ \bo\ }$. It is easy to see
that $(d_{D[X]})_{_{\!{0}}} = d_{D}$ and $(v_{D[X]})_{_{\!{0}}} =
v_{D}$, and therefore, $(t_{D[X]})_{_{\!{0}}} = t_{D}$ and
$(w_{D[X]})_{_{\!{0}}} = w_{D}$.
In fact, it
is known that:
$$
\begin{array}{rl}
(E[X])^{v_{D[X]}}= &\hskip -6pt E^{v_D}[X] \; \mbox{ for all $E\in
\F(D)$, } \\(E[X])^{t_{D[X]}}=&\hskip -6pt E^{t_D}[X] \mbox{ \;
and \; } (E[X])^{w_{D[X]}}=E^{w_D}[X] \; \mbox{ for all $E \in
\FF(D)$. }
\end{array}
$$
Thus $t_{D[X]}$ and $w_{D[X]}$ are strict extensions of $t_D$ and
$w_D$, respectively.

\smallskip

The present work is devoted to the following problem: how to
extend ``in a canonical way'' an {\sl arbitrary} semistar
operation of $D$ to the polynomial ring $D[X]$. The first attempts
to extend a semistar operation of $D$ to a semistar operation of
$D[X]$ were done by G. Picozza \cite{Pi} and then by the first two
authors of this paper \cite{CF, CF2011}.
Their study is focused on the stable semistar operations of finite
type.  In this paper, we provide a complete solution to this
problem in the most general setting. As an application, we show
that, in the particular cases of stable semistar operations of
finite type or semistar operations defined by families of
overrings, we reobtain the main results given in \cite{CF} and
\cite{CF2011}.
Finally, we investigate the behavior of the polynomial extensions
of  some operations among the most important and classical ones
such as the $d_D$, $v_D$, $t_D$, $w_D$ and $b_D$ operations.

\smallskip

To be more precise, in Section 2, we show that there always exists
the maximum in the   set of all strict extensions to the
polynomial ring $D[X]$  of a given semistar operation $\star$ on
$D$. Let $\blacktriangle^{\!\star}$ denote this semistar operation
of $D[X]$. After giving an explicit description of
$\blacktriangle^{\!\star}$, we show that such a semistar operation
is never of finite type and we investigate the properties of
$\susf$ and $\sust$; in particular,
$(\sus)_{\!{_{f}}}=(\susf)_{\!{_{f}}}$ is the largest finite-type
strict extension of $\stf$ and
$\widetilde{\sus}=\widetilde{\blacktriangle^{\tilde{\star}}}$ is
the largest stable finite-type strict extension of
$\tilde{\star}$. As an application, we consider some of the
classical operations and we prove that
$({\blacktriangle^{v_D}})_{\!{_{f}}}=({\blacktriangle^{t_D}})_{\!{_{f}}}=t_{D[X]}$
and
$\widetilde{\blacktriangle^{v_D}}=\widetilde{\blacktriangle^{w_D}}=w_{D[X]}$.
Moreover, for the trivial operations, we have
${\blacktriangle}^{e_D} \lneq e_{D[X]}$ and $d_{D[X]}\leq
({\blacktriangle}^{d_D})_f$, with
$d_{D[X]}=({\blacktriangle}^{d_D})_f$ if and only if $D$ is a
field.


After having observed  that each semistar operation  $\star$ of
$D$ admits infinitely many  strict extensions to $D[X]$ and, among
them, the largest one is $\blacktriangle^{\!\star}$, in Section 3
we show the existence of the smallest strict extension to $D[X]$.
Unlike the largest strict extension, the smallest strict
extension, denoted by ${\boldsymbol{\curlywedge}^\star}$, is not
in general described in an explicit form. However, in case of
stable semistar operations of finite type, we prove that
$\boldsymbol{\curlywedge}^{\stt}=[\stt]$, where $ A^{[\stt]} :=
\bigcap \{ AD_Q[X] \mid Q \in \QMax^{\stf}(D)\}$ for each $A \in
\FF(D[X])$.

In the last section, we generalize some results concerning the
polynomial extensions of a stable finite-type semistar operation
to the polynomial extensions of a semistar operation
defined by a given family of overrings of $D$.
As an application of the main result of the section, we obtain
$\boldsymbol{\curlywedge}^{b_D} \leq b_{D[X]}=
(\boldsymbol{\curlywedge}^{b_D})_a \leq
(\blacktriangle^{b_D})_a\lneq \blacktriangle^{b_D}$,
with $b_{D[X]}=(\blacktriangle^{b_D})_a$ if and only if $D$ is a field.


\section{Polynomial Strict Extensions of General Semistar Operations}


The goal of the present section is to define in  a canonical way
an extension to the polynomial ring $D[X]$ of a given semistar
operation  $\star$ of $D$.

\smallskip

For $A\in \FF(D[X])$ with $A \subseteq K[X]$, we denote by
$\co_{D}(A)$ 
the $D$-submodule of $K$
generated by the contents $\co_{D}(f)$ for all $f \in A$,
i.e., $\co_{D}(A) := \sum_{f \in A}\,\co_D(f)$.
Then, obviously, $ A \subseteq \co_{D}(A)[X]$.

\begin{theorem}\label{strictext}
Let $D$ be an integral domain with quotient field $K$,  let
$\star$ be a semistar operation of $D$, and let $T$ be an overring
of $D$ such that $T^\star = T$. For each $A \in \FF(D[X])$, set

$$
A^{\susT}:= \left\{
\begin{array}{ll}
\bigcap \{z^{-1} (\co_D(zA))^\star[X] \mid 0\neq z \in (T[X] :
A)\}\,, & \mbox{ if }
(T[X] : A) \neq (0)\,, \\
K(X)\,,&  \mbox{ if }
 (T[X] : A) =(0)\,. \\
\end{array}
\right.
$$
Then:
\begin{enumerate}
\item $\susT$ is a semistar operation of $D[X]$.
\item If $\star$ is a (semi)star operation of $D$, then
$\susT$ is a (semi)star operation of $D[X]$.

\item If $T' \subseteq T''$ are two overrings of $D$
such that $(T')^\star = T'$ and $ (T'')^\star =T''$, then
${\blacktriangle^{\!\star}_{T^{\prime}}} \geq
{\blacktriangle^{\!\star}_{T^{\prime\prime}}}$.

\item $\sus:=\susK$  is a strict extension of $\star$
to $D[X]$.  In fact, it  is the largest strict extension of
$\star$ to $D[X]$.
\end{enumerate}
\end{theorem}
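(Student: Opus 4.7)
My plan is to dispatch parts (1)--(3) by direct verification from the definition, reserving the technical content for part (4).

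For part (1), I will check the three semistar axioms. Axiom $(\star_1)$ follows by reindexing: for $0\neq y\in K(X)$, the map $z\mapsto zy^{-1}$ is a bijection between $(T[X]:A)\setminus\{0\}$ and $(T[X]:yA)\setminus\{0\}$, and a direct substitution shows $(yA)^{\susT}=y\cdot A^{\susT}$. Axiom $(\star_2)$ is routine: if $A\subseteq B$, then $(T[X]:B)\subseteq (T[X]:A)$, and for $z\in(T[X]:B)$ the term $z^{-1}\co_D(zA)^\star[X]$ is contained in $z^{-1}\co_D(zB)^\star[X]$, so $A^{\susT}\subseteq B^{\susT}$. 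Extensivity in $(\star_3)$ is immediate from $zA\subseteq \co_D(zA)[X]\subseteq\co_D(zA)^\star[X]$. The crux is idempotence, and here I will exploit the hypothesis $T^\star=T$: since $zA\subseteq T[X]$ forces $\co_D(zA)\subseteq T$, we obtain $\co_D(zA)^\star\subseteq T^\star=T$, hence $zA^{\susT}\subseteq T[X]$ for every $z\in(T[X]:A)$. Thus $(T[X]:A)=(T[X]:A^{\susT})$, the intersections defining $A^{\susT}$ and $(A^{\susT})^{\susT}$ run over the same index set, and the containment $\co_D(zA^{\susT})\subseteq\co_D(zA)^\star$ (obtained by reading off coefficients in $zA^{\susT}\subseteq\co_D(zA)^\star[X]$) yields $\co_D(zA^{\susT})^\star\subseteq\co_D(zA)^\star$ after applying $\star$ and using its idempotence.

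Part (2) reduces to a one-line check: $1\in (T[X]:D[X])$ and $\co_D(D[X])=D$, so the term at $z=1$ in the defining intersection equals $D^\star[X]=D[X]$, forcing $D[X]^{\susT}\subseteq D[X]$. Part (3) is purely formal: the individual factors $z^{-1}\co_D(zA)^\star[X]$ depend on $z$ only, and $T'\subseteq T''$ gives $(T'[X]:A)\subseteq (T''[X]:A)$, so the intersection over the smaller index set yields the larger module, i.e.\ $\blacktriangle^{\!\star}_{T'}\geq\blacktriangle^{\!\star}_{T''}$.

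Part (4) is the main obstacle and splits into two claims. For strict extension I will take $E\in\FF(D)$ and first verify $(K[X]:E[X])=K[X]$ by the elementary observation that $z\cdot e\in K[X]$ with $0\neq e\in E\subseteq K$ forces $z\in K[X]$. Next, for $z\in K[X]$, I will establish the identity $\co_D(zE[X])=\co_D(z)\cdot E$: the containment $\supseteq$ comes from testing on monomials $eX^j\in E[X]$ ($e\in E$), while $\subseteq$ is a Gauss-type argument giving $\co_D(zf)\subseteq\co_D(z)\cdot\co_D(f)\subseteq\co_D(z)\cdot E$ for each $f\in E[X]$. Plugging in, it remains to show $(\co_D(z)\cdot E)^\star=(\co_D(z)\cdot E^\star)^\star$, which is the key algebraic step: for each $x\in\co_D(z)$, axiom $(\star_1)$ yields $xE^\star=(xE)^\star\subseteq(\co_D(z)\cdot E)^\star$, whence $\co_D(z)\cdot E^\star\subseteq(\co_D(z)\cdot E)^\star$ after summing and applying $(\star_2)$--$(\star_3)$. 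Taking $z=1$ then pins down $E[X]^{\sus}\subseteq E^\star[X]$, while the general $z$ gives $E^\star[X]\subseteq z^{-1}(\co_D(z)\cdot E)^\star[X]$, so $E[X]^{\sus}=E^\star[X]$. Finally, for the maximality, let $\ba$ be any strict extension of $\star$. For $A\in\FF(D[X])$ with $(K[X]:A)\neq (0)$ and $0\neq z\in(K[X]:A)$, the inclusion $zA\subseteq\co_D(zA)[X]$ combined with strictness of $\ba$ on $\co_D(zA)\in\FF(D)$ yields
$$
z\cdot A^{\ba}=(zA)^{\ba}\subseteq(\co_D(zA)[X])^{\ba}=\co_D(zA)^\star[X],
$$
so $A^{\ba}\subseteq z^{-1}\co_D(zA)^\star[X]$; intersecting over $z$ gives $A^{\ba}\subseteq A^{\sus}$. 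The case $(K[X]:A)=(0)$ is trivial since then $A^{\sus}=K(X)$. The principal technical point in the whole argument is the identity $(\co_D(z)E)^\star=(\co_D(z)E^\star)^\star$, on which both the strictness computation and the consistency with the idempotence in part (1) ultimately rest.
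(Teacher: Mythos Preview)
Your proof is correct and follows essentially the same route as the paper's: the verifications of $(\star_1)$, $(\star_2)$, extensivity, part (3), and the maximality argument in part (4) are identical in substance, and your computation of $E[X]^{\sus}=E^\star[X]$ via $\co_D(zE[X])=\co_D(z)E$ and $(\co_D(z)E)^\star\supseteq \co_D(z)E^\star$ is exactly the paper's Claim~1 specialized to $T=K$. The one organizational difference is in idempotence: the paper first proves Claim~1 (that $(E^\star[X])^{\susT}=E^\star[X]$ for $E\subseteq T$) and then applies it to $\co_D(zA)^\star[X]$, whereas you bypass this by showing directly that $(T[X]:A)=(T[X]:A^{\susT})$ and comparing terms---a slightly more streamlined variant of the same idea.
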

\begin{proof} From the definition, it follows immediately that $A^{\susT} \in \FF(D[X])$.

{\bf Claim 1.} {\sl  Let $E\in \FF(D)$. If $E \subseteq T$, then
$(E[X])^{\susT}=E^\star[X] =(E^\star[X])^{\susT}$}.

Since $\co_D(E[X])=E$ and $1\in (T[X]:E[X])$, we have
$(E[X])^{\susT}\subseteq E^\star[X]$. For the opposite inclusion,
let $z \in (T[X]:E[X])\setminus (0)$. Then, obviously, $z\in
K[X]$. Write $z=z_0+z_1X+\cdots +z_nX^n$, with $z_i \in K$. Then
$\co_D(z E[X])=z_0E+z_1E+\cdots +z_nE$. Therefore, $z^{-1}
(\co_D(z E[X]))^\star[X]= z^{-1}(z_0E+z_1E+\cdots
+z_nE)^\star[X]\supseteq z^{-1}(z_0E^\star+z_1E^\star+\cdots
+z_nE^\star)[X] \supseteq z^{-1} (z E^\star[X])=E^\star[X]$. Thus,
we have $(E[X])^{\susT}\supseteq E^\star[X]$ and hence
$(E[X])^{\susT}=E^\star[X]$. Also, since $E^\star\in \FF(D)$ and
$E^\star \subseteq T^\star=T$, we have
$(E^\star[X])^{\susT}=(E^\star)^\star[X]=E^\star[X]$.

{\bf Claim 2.} {\sl For each $\alpha \in K(X)\setminus (0)$ and
$A \in \FF(D[X])$, $(\alpha A)^{\susT}=\alpha
A^{\susT}$.}

It follows from the fact that $(T[X]:A)=\alpha^{-1}(T[X]:\alpha
A)$.

{\bf Claim 3.} {\sl If  $A_1, A_2 \in \FF(D[X])$ and $A_1\subseteq
A_2$, then $A_1^{\susT}\subseteq A_2^{\susT}$. }

This is a straightforward consequence of the definition.

{\bf Claim 4.} {\sl For each $A \in \FF(D[X])$, $A\subseteq
A^{\susT}$.}

Let $z\in (T[X]:A)\setminus (0)$. Then $zA \subseteq \co_D(zA)[X]
\subseteq (\co_D(zA))^\star[X]$, and hence $A \subseteq z^{-1}
(\co_D(zA))^\star[X]$. Therefore, $A\subseteq A^{\susT}$.

{\bf Claim 5.} {\sl For each $A \in \FF(D[X])$,
$(A^{\susT})^{\susT}=A^{\susT}$.}

From Claims 3 and 4,  $A^{\susT}\subseteq (A^{\susT})^{\susT}$.
For the opposite inclusion, we may assume that $A^{\susT}\neq
K(X)$. Let $z \in (T[X]:A)\setminus (0)$.
By Claims 1, 2, and 3, we have $ z (A^{\susT})^{\susT} = (z
A^{\susT})^{\susT} =((z A)^{\susT})^{\susT}\subseteq
((\co_D(zA))^\star[X])^{\susT}=((\co_D(z
A))^\star)^\star[X]=(\co_D(z A))^\star[X]$, i.e.,
$(A^{\susT})^{\susT}\subseteq z^{-1}(\co_D(z A))^\star[X]$. Since
$z$ is an arbitrary nonzero element of $(T[X]:A)$, we have
$(A^{\susT})^{\susT}\subseteq A^{\susT}$.

\smallskip

(1) Claims 2--5 show that ${\susT}$ is a semistar operation
of $D[X]$.

(2)  Note that, by Claim 1, $D[X]^{\susT} = D^\star[X]$.

(3) is a direct consequence of the definition.

(4) By (1) and Claim 1, $\susK$ is a strict extension of
$\star$ to $D[X]$. In order to show that $\susK$ is the largest
strict extension of $\star$, let $\bigstar$ be a strict extension
of $\star$ to $D[X]$ and let $A \in \FF(D[X])$. If $(K[X]:A)
=(0)$, then clearly $A^\bigstar \subseteq K(X) = A^{\susK}$.
Assume that $(K[X]:A)\neq (0)$ and let $z \in (K[X]:A)\setminus
(0)$. Then $zA \subseteq \co_D(zA)[X]$ and so $z A^\bigstar=(zA)^\bigstar
\subseteq (\co_D(zA)[X])^\bigstar = (\co_D(zA))^\star[X]$, i.e.,
$A^\bigstar \subseteq z^{-1}(\co_D(zA))^\star[X]$. Hence
$A^\bigstar \subseteq A^{\susK}$.
%
\end{proof}

\begin{remark}
\rm For each $E\in \FF(D)$, set
$$
E^{\star_e}:= \left\{
\begin{array}{ll} E^\star\,,& \mbox{ if }
(D^\star : E) \neq (0)\,, \\
K\,, & \mbox{ otherwise. }
\end{array}
\right.
$$
Then ${\star_e}$ is a semistar operation of $D$ with $\star \leq
\star_e$ and $(D^\star)^{\star_e}=D^\star$. Using Claims 1 and 2,
we can easily show that $\blacktriangle^{\!\star}_{D^{\star}}$ is an extension of $\star_e$ to $D[X]$.
\end{remark}

\smallskip

In the following,
we investigate the properties of $\susf$ and $\sust$.  Note first
that, from the definition of $\sus$, it follows immediately that
$\star' \leq \star''$ implies that
${\blacktriangle^{\!\star^{\SMALL{\prime}}}} \leq
{\blacktriangle^{\!\star^{\SMALL{\prime\prime}}}}$. In particular,
$\sust \leq \susf \leq \sus$.

\begin{theorem} \label{blacktriangle-f}
Let $\star$ be a semistar operation of $D$ and let $\sus$ denote
the strict extension $\susK$ of $\star$ to $D[X]$ introduced in
{\rm Theorem \ref{strictext}}. Then:
\begin{enumerate}
\item $(\sus)_{\!{_{f}}}=(\susf)_{\!{_{f}}}$ is the
largest finite-type strict extension of $\stf$.

\item $\widetilde{\sus}=\widetilde{\blacktriangle^{\tilde{\star}}}$ is
the largest stable finite-type strict extension of $\tilde{\star}$.
\end{enumerate}
\end{theorem}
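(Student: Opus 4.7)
The plan is to mirror the structure of part~(1) with $\stt$ in place of $\stf$, and to obtain the equality $\widetilde{\sus}=\widetilde{\sust}$ by showing that both sides qualify as stable, finite-type strict extensions of $\stt$ and then invoking the maximality deduced from Theorem~\ref{strictext}(4) applied with $\stt$ in place of $\star$.

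For part~(1), I would first verify $A^{\sus}=A^{\susf}$ for every $A\in\f(D[X])$: writing $A=(f_1,\ldots,f_n)D[X]$ and taking $z\in(K[X]:A)\setminus\{0\}$, the content $\co_D(zA)=\sum_j\co_D(zf_j)$ is finitely generated in $D$, so $(\co_D(zA))^\star=(\co_D(zA))^\stf$, and the intersection defining $A^{\sus}$ coincides with that defining $A^{\susf}$. This yields $(\sus)_f=(\susf)_f$. I would then verify $(\sus)_f$ is a finite-type strict extension of $\stf$ by combining the content bound $A^{\sus}\subseteq\co_D(A)^\star[X]\subseteq E^\stf[X]$ for $A\in\f(D[X])$, $A\subseteq E[X]$, with Claim~1 in the proof of Theorem~\ref{strictext}, which gives $F[X]^{\sus}=F^\star[X]=F^\stf[X]$ for $F\in\f(D)$, $F\subseteq E$; taking unions produces $(\sus)_f(E[X])=E^\stf[X]$. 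Maximality is then automatic: any finite-type strict extension $\ba$ of $\stf$ satisfies $\ba\leq\blacktriangle^\stf=\susf$ by Theorem~\ref{strictext}(4), so $\ba=\ba_f\leq(\susf)_f=(\sus)_f$.

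For part~(2), I would first handle maximality: any stable, finite-type strict extension $\ba$ of $\stt$ satisfies $\ba\leq\blacktriangle^\stt=\sust$ by Theorem~\ref{strictext}(4) applied with $\stt$ in place of $\star$, and since $\ba$ is stable and of finite type, $\ba=\widetilde{\ba}\leq\widetilde{\sust}$; hence $\widetilde{\sust}$ is the largest such, provided it itself qualifies. Stability and finite type are built in; $\widetilde{\sust}(E[X])\subseteq E^\stt[X]$ follows from $\widetilde{\sust}\leq\sust$ together with Claim~1 (which gives $\sust(E[X])=E^\stt[X]$). For the reverse inclusion, given $\alpha=\sum_i a_iX^i\in E^\stt[X]$, I would pick $J_i\in\f(D)$ with $J_i\subseteq D$, $J_i^\star=D^\star$, and $a_iJ_i\subseteq E$, then set $J:=\prod_iJ_i$. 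Each $J_i$ avoids every quasi-$\stf$-maximal ideal, hence so does $J$, yielding $J^\star=D^\star$ and therefore $J^\stt=D^\stt$. Then $B:=JD[X]\in\f(D[X])$ satisfies $B^{\sust}=J^\stt[X]=D^\stt[X]=D[X]^{\sust}$ by Claim~1 applied with $\stt$, while $\alpha J\subseteq E[X]$ because $J\subseteq J_i$ for every~$i$; hence $\alpha\in(E[X]:B)\subseteq\widetilde{\sust}(E[X])$.

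Finally, for the equality $\widetilde{\sus}=\widetilde{\sust}$: the inclusion $\widetilde{\sust}\leq\widetilde{\sus}$ is immediate from $\sust\leq\sus$ and monotonicity of $\widetilde{(\cdot)}$. For the reverse, I would show $\widetilde{\sus}$ is itself a stable, finite-type strict extension of $\stt$ and appeal to the maximality just established. The $\supseteq$ direction runs verbatim with the same $B=JD[X]$, using Claim~1 with $\star$ in place of $\stt$ to give $B^{\sus}=J^\star[X]=D^\star[X]=D[X]^{\sus}$. The $\subseteq$ direction $\widetilde{\sus}(E[X])\subseteq E^\stt[X]$ is the main obstacle, since the naive bound $\widetilde{\sus}\leq\sus$ only produces $E^\star[X]$, in general strictly larger than $E^\stt[X]$. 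I plan to handle it via a Dedekind--Mertens content argument: for $\alpha\in\widetilde{\sus}(E[X])$ witnessed by $B=(f_1,\ldots,f_r)D[X]\in\f(D[X])$ with $\alpha B\subseteq E[X]$ and $B^{\sus}=D^\star[X]$, the identities $\co_D(\alpha)^{m+1}\co_D(f_j)=\co_D(\alpha)^m\co_D(\alpha f_j)$ (for $m\geq\max_j\deg f_j$) sum to $\co_D(\alpha)^{m+1}\co_D(B)\subseteq\co_D(\alpha)^m E$; combined with $\co_D(B)\not\subseteq P$ for every $P\in\QMax^\stf(D)$ (which follows from $\co_D(B)^\star=D^\star$), localization at $P$ yields $\co_D(\alpha)^{m+1}D_P\subseteq\co_D(\alpha)^m ED_P$, and a Cayley--Hamilton / Nakayama-type argument then forces the coefficients of $\alpha$ to lie in $ED_P$ for each such $P$; intersecting over $P$ gives $\alpha\in E^\stt[X]$.
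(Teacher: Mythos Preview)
Your argument for part~(1) is correct and only differs in presentation from the paper's: you verify $A^{\sus}=A^{\susf}$ directly for $A\in\f(D[X])$ by noting that $\co_D(zA)$ is finitely generated, whereas the paper argues abstractly that $(\sus)_f$ is already a strict extension of $\stf$, so Theorem~\ref{strictext}(4) gives $(\sus)_f\le\susf$ and hence $(\sus)_f\le(\susf)_f$. Both work.

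For part~(2) your overall strategy is sound and close to the paper's, and everything up to the very last step is fine. The gap is the ``Cayley--Hamilton / Nakayama-type argument''. From $\co_D(\alpha)^{m+1}\co_D(B)\subseteq\co_D(\alpha)^m E$ and $\co_D(B)D_P=D_P$ you correctly obtain $I^{m+1}\subseteq I^m M$ in $D_P$, where $I=\co_D(\alpha)D_P$ and $M=ED_P$. But this relation does \emph{not} force $I\subseteq M$: for instance, in $R=k[[s,t]]$ take $I=(s,t)$ and $M=tR+(s^2/t)R$; then $I^2=(s^2,st,t^2)\subseteq IM=(st,\,s^3/t,\,t^2,\,s^2)$, yet $s\notin M$. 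The determinant trick only yields that each element of $I$ is integral over $M$ (when $M$ is even an ideal), not that it lies in $M$; and Nakayama does not apply since $M$ need not sit inside the maximal ideal and $I$ need not be principal or invertible in $D_P$.

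The remedy is to apply Dedekind--Mertens in the other direction, exactly as the paper does. With $m:=\deg\alpha$ (rather than $\max_j\deg f_j$) one has, for each $j$,
\[
\co_D(\alpha)\,\co_D(f_j)^{m+1}=\co_D(\alpha f_j)\,\co_D(f_j)^m\subseteq E,
\]
since $\alpha f_j\in E[X]$ and $\co_D(f_j)\subseteq D$. Summing over $j$ gives $\co_D(\alpha)\cdot H\subseteq E$ with $H:=\sum_j\co_D(f_j)^{m+1}\in\f(D)$; and from $\co_D(B)^\star=(\sum_j\co_D(f_j))^\star=D^\star$ one gets $H^\star=D^\star$. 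This is precisely a witness for $\co_D(\alpha)\subseteq E^{\stt}$, hence $\alpha\in E^{\stt}[X]$. With this correction, the remainder of your argument (including your separate verification that $\widetilde{\sust}$ is a strict extension of $\stt$, which the paper obtains only a posteriori) goes through.
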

\begin{proof}
(1) Since $\sus$ is a strict extension of $\star$,
it follows immediately  that $(\sus)_{\!{_{f}}}$
is a strict extension of $\stf$. Then, by
Theorem \ref{strictext}(4), $(\sus)_{\!{_{f}}}\leq \susf$ and
hence $(\sus)_{\!{_{f}}}\leq (\susf)_{\!{_{f}}}$. Since the
opposite inequality is obvious, we have
$(\sus)_{\!{_{f}}}=(\susf)_{\!{_{f}}}$. Now, let $\bigstar$ be a finite-type
strict extension of $\stf$, then $\bigstar\leq
\blacktriangle^{\stf}$ by Theorem \ref{strictext}(4), and hence
$\bigstar=\bigstar_{\!{_{f}}}\leq
(\blacktriangle^{\stf})_{\!{_{f}}}=(\blacktriangle^{\star})_{\!{_{f}}}$.

(2) We will show first that $\widetilde{\sus}$ is a strict
extension of $\tilde{\star}$. Let $E\in \FF(D)$. Since $\sus$ is
an extension of $\star$, $\widetilde{\sus}$ is an extension of
$\tilde{\star}$ \cite[Lemma 5]{CF2011}, and hence we have
$E^{\tilde{\star}}[X]\subseteq (E[X])^{\widetilde{\sus}}$. Let $0
\neq f\in (E[X])^{\widetilde{\sus}}\subseteq
(E[X])^{\sus}=E^{\star}[X]\subseteq K[X]$. Then  $fJ\in E[X]$ for
some $J\in \f(D[X])$ such that $J\subseteq D[X]$ and
$J^{\sus}=(D[X])^{\sus}$. 
Since $J\subseteq c_D(J)[X]\subseteq D[X]$ and $J^{\sus}=(D[X])^{\sus}$,
we have $\co_D(J)^\star[X]=
(\co_D(J)[X])^{\sus}=(D[X])^{\sus}=D^\star[X]$, i.e.,
$(\co_D(J))^{\star}=D^{\star}$. Write $J=(g_1,g_2, \cdots , g_n)$.
Since $fg_i\in fJ\subseteq E[X]$, $\co_D(fg_i)\subseteq E$ for all
$i=1, 2, \cdots , n$. Let $m:=\text{deg}\, f$. Then, by Dedekind-Mertens Lemma
\cite[Theorem 28.1]{G},
$\co_D(f)\co_D(g_i)^{m+1}=\co_D(fg_i)\co_D(g_i)^{m}\linebreak \subseteq E$
for all $i=1, 2,  \cdots, n$, and so
$$\co_D(f)((\co_D(g_1))^{m+1}+
(\co_D(g_2))^{m+1}+ \cdots + (\co_D(g_n))^{m+1})\subseteq E.$$
Note that $(\co_D(g_1))^{m+1}+(\co_D(g_2))^{m+1}+ \cdots +
 (\co_D(g_n))^{m+1}$ is a finitely generated integral ideal of $D$.
Also, from the equation $(\co_D(g_1)+ \co_D(g_2)+\cdots +
\co_D(g_n))^{\star}=(\co_D(J))^{\star}=D^{\star}$, it easily
follows that
$$((\co_D(g_1))^{m+1} +(\co_D(g_2))^{m+1} + \cdots +
(\co_D(g_n))^{m+1})^{\star}=D^{\star}.$$ Therefore,
$\co_D(f)((\co_D(g_1))^{m+1}+ (\co_D(g_2))^{m+1} +\cdots +
(\co_D(g_n))^{m+1})\subseteq E$ implies that $\co_D(f)\in
E^{\tilde{\star}}$, i.e., $f\in E^{\tilde{\star}}[X]$. Thus, we
have $(E[X])^{\widetilde{\sus}}\subseteq E^{\tilde{\star}}[X]$ and
hence $E^{\tilde{\star}}[X] = (E[X])^{\widetilde{\sus}}$.
Therefore, $\widetilde{\sus}$ is a strict extension of
$\widetilde{\star}$ that is stable and of finite type.

By Theorem \ref{strictext}(4), $\widetilde{\sus}\leq
\blacktriangle^{\tilde{\star}}$ and hence $\widetilde{\sus}\leq
\widetilde{\blacktriangle^{\tilde{\star}}}$. Since the opposite
inequality is obvious, we have
$\widetilde{\sus}=\widetilde{\blacktriangle^{\tilde{\star}}}$. Let
$\bigstar$ be a stable finite-type strict extension of
$\tilde{\star}$. Then $\bigstar \leq
\blacktriangle^{\tilde{\star}}$ and hence
$\bigstar=\widetilde{\bigstar}\leq \widetilde{\
\blacktriangle^{\widetilde{\star}}\ }=\widetilde{\sus}$.
\end{proof}

\begin{corollary}\label{cor9}
Let $t_{D[X]}$ and $w_{D[X]}$ be the $t$-semistar operation and the
$w$-semistar operation of $D[X]$, respectively. Then:
\begin{enumerate}
\item $({\blacktriangle^{v_D}})_{\!{_{f}}}=({\blacktriangle^{t_D}})_{\!{_{f}}}=t_{D[X]}$.

\item $\widetilde{\blacktriangle^{v_D}}=\widetilde{\blacktriangle^{w_D}}=w_{D[X]}$.
\end{enumerate}
\end{corollary}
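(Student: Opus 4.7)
My plan is to deduce both parts as direct specializations of Theorem \ref{blacktriangle-f}, using two auxiliary inputs from the preliminaries: first, the fact that $t_{D[X]}$ is a strict extension of $t_D$ (necessarily of finite type, being the finite-type associate of $v_{D[X]}$) and that $w_{D[X]}$ is a stable finite-type strict extension of $w_D$; second, the classical inequalities also recalled in the preliminaries, namely that for any (semi)star operation $\bigstar$ of a domain $A$ one has $\bigstar_{\!{_{f}}}\leq t_A$ and $\widetilde{\bigstar}\leq w_A$.

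For part (1), I specialize Theorem \ref{blacktriangle-f}(1) to $\star=v_D$. Since $(v_D)_{\!{_{f}}}=t_D$, this yields $(\blacktriangle^{v_D})_{\!{_{f}}}=(\blacktriangle^{t_D})_{\!{_{f}}}$ and identifies the common operation as the largest finite-type strict extension of $t_D$. Because $t_{D[X]}$ is a finite-type strict extension of $t_D$, we immediately obtain $t_{D[X]}\leq (\blacktriangle^{t_D})_{\!{_{f}}}$. For the reverse inequality, observe that $v_D$ is a (semi)star operation of $D$, hence by Theorem \ref{strictext}(2) so is $\blacktriangle^{v_D}$, and therefore $(\blacktriangle^{v_D})_{\!{_{f}}}$ is a (semi)star operation of $D[X]$ of finite type; applying the bound $\bigstar_{\!{_{f}}}\leq t_{D[X]}$ with $\bigstar:=(\blacktriangle^{v_D})_{\!{_{f}}}$ closes the circle.

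For part (2), I specialize Theorem \ref{blacktriangle-f}(2) to $\star=v_D$. Since $\widetilde{v_D}=w_D$, this gives $\widetilde{\blacktriangle^{v_D}}=\widetilde{\blacktriangle^{w_D}}$, and characterizes this operation as the largest stable finite-type strict extension of $w_D$. Since $w_{D[X]}$ is itself a stable finite-type strict extension of $w_D$, we obtain $w_{D[X]}\leq \widetilde{\blacktriangle^{v_D}}$. Conversely, $\widetilde{\blacktriangle^{v_D}}$ is a stable finite-type (semi)star operation of $D[X]$, hence dominated by $w_{D[X]}$ via the bound $\widetilde{\bigstar}\leq w_{D[X]}$.

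I do not anticipate any serious obstacle: once Theorem \ref{blacktriangle-f} is available, the corollary amounts to matching the extremal characterizations of $(\blacktriangle^{v_D})_{\!{_{f}}}$ and $\widetilde{\blacktriangle^{v_D}}$ against those of $t_{D[X]}$ and $w_{D[X]}$, respectively. The only minor verification worth making explicit is that the (semi)star property propagates from $v_D$ through the constructions $\blacktriangle$, $(\cdot)_{\!{_{f}}}$ and $\widetilde{\phantom{\star}}$, which is routine.
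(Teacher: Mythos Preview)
Your proof is correct and follows essentially the same approach as the paper: both arguments specialize Theorem~\ref{blacktriangle-f} to $\star=v_D$ and then identify the resulting extremal strict extensions with $t_{D[X]}$ and $w_{D[X]}$ using the facts from the preliminaries that $t_{D[X]}$ (respectively, $w_{D[X]}$) is a finite-type (respectively, stable finite-type) strict extension of $t_D$ (respectively, $w_D$) and is maximal among finite-type (respectively, stable finite-type) (semi)star operations of $D[X]$. The only cosmetic difference is that the paper obtains part~(2) from part~(1) via $w_{D[X]}=\widetilde{\,t_{D[X]}}$, whereas you run an independent argument parallel to part~(1); both routes are equally short.
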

\begin{proof}
(1) Since $t_{D[X]}$ is the largest finite-type (semi)star
operation of $D[X]$ and it is a strict extension of $t_D$ (as
observed in  Section 1), it is the largest finite-type strict
extension of $t_D$ and hence, by Theorem~\ref{blacktriangle-f}(1),
$t_{D[X]}=({\blacktriangle^{t_D}})_{\!{_{f}}}=({\blacktriangle^{v_D}})_{\!{_{f}}}$.

(2) It follows from  Theorem~\ref{blacktriangle-f}(2)
and the fact that $\widetilde{\ t_{D[X]}}=w_{D[X]}$. Indeed,
$w_{D[X]}=\widetilde{\ t_{D[X]}}=\widetilde{\;\,
({\blacktriangle^{v_D}})_{\!{_{f}}}\, }=\widetilde{\,\
\blacktriangle^{v_D}\,} =\widetilde{\
\blacktriangle^{\widetilde{v_D}}}=\widetilde{\
\blacktriangle^{w_D}}$.
\end{proof}

\smallskip

It is natural to ask whether $(\susf)_{\!{_{f}}}= \susf$ and
$\widetilde{\blacktriangle^{\tilde{\star}}}={\blacktriangle^{\tilde{\star}}}$.
The next proposition provides the negative answer to that.

\begin{proposition} \label{prop10}
 With the notation of {\rm Theorem \ref{blacktriangle-f}}, the semistar operation $\blacktriangle^{\star}$
of $D[X]$ is not of finite type for any semistar operation $\star$
of $D$.
\end{proposition}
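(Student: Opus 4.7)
The plan is to exhibit a single $A \in \FF(D[X])$ together with an element $\alpha \in A^{\sus}\setminus A^{(\sus)_{\!_f}}$; this alone witnesses $(\sus)_{\!_f} \neq \sus$. I would take $A := D[X,X^{-1}]$, viewed as a nonzero $D[X]$-submodule of $K(X)$, so $A \in \FF(D[X])$. Every nonzero $z \in K(X)$ has bounded pole order along $(X)$, so $zX^{-n} \in K[X]$ cannot hold simultaneously for all $n \ge 0$; hence $(K[X]:A) = (0)$, and by the very definition of $\sus = \susK$ in Theorem \ref{strictext}, $A^{\sus} = K(X)$. In particular, $\alpha := 1/(X-1) \in A^{\sus}$.

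Next I bound $B^{\sus}$ for an arbitrary $B \in \f(D[X])$ with $B \subseteq A$. Writing $B = (b_1,\ldots,b_n)D[X]$ with each $b_i \in D[X,X^{-1}]$, I choose $m \ge 0$ with $X^m b_i \in D[X]$ for all $i$; then $X^m B \subseteq D[X]$, so $X^m \in (K[X]:B)\setminus\{0\}$. Restricting the intersection defining $B^{\sus}$ to this one $z = X^m$, and using $\co_D(X^m B) = \co_D(B)$ since multiplication by the monomial $X^m$ preserves coefficients, I get
\[
B^{\sus} \;\subseteq\; X^{-m}(\co_D(X^m B))^\star[X] \;=\; X^{-m}\co_D(B)^\star[X] \;\subseteq\; X^{-m}K[X].
\]

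If $\alpha$ were in $B^{\sus}$, then $X^m/(X-1) \in K[X]$, forcing $(X-1)\mid X^m$ in $K[X]$; evaluating at $X = 1$ gives an immediate contradiction. Hence $\alpha \notin B^{\sus}$ for every finitely generated $B \subseteq A$, so $\alpha \notin A^{(\sus)_{\!_f}}$. Combined with $\alpha \in A^{\sus}$, this yields $(\sus)_{\!_f} \lneq \sus$, and $\sus$ is not of finite type.

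The one thing to get right is the choice of $A$: it must be large enough that $(K[X]:A) = (0)$, so that $A^{\sus} = K(X)$ explodes to the maximum, while simultaneously every finitely generated $B \subseteq A$ can be ``cleared into $D[X]$'' by a single monomial in $X$, which pins $B^{\sus}$ inside $X^{-m}K[X]$ and prevents it from seeing a pole at $X-1$. Everything else is routine, and the argument is uniform in $\star$ (it even covers the case where $D$ is a field).
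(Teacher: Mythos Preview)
Your proof is correct and takes essentially the same approach as the paper: the paper uses the identical module $A=\bigcup_{n\ge 1}X^{-n}D[X]=D[X,X^{-1}]$, observes $(K[X]:A)=(0)$ so $A^{\sus}=K(X)$, and bounds each finitely generated $B\subseteq A$ by $B^{\sus}\subseteq X^{-m}D^\star[X]\subseteq K[X,X^{-1}]\subsetneq K(X)$. The only cosmetic differences are that the paper bounds $B^{\sus}$ via monotonicity and $(X^{-m}D[X])^{\sus}=X^{-m}D^\star[X]$ rather than plugging $z=X^m$ into the defining intersection, and it concludes with the set containment $A^{(\sus)_f}\subseteq K[X,X^{-1}]$ rather than isolating the witness $1/(X-1)$; one small notational slip is that $\co_D(B)$ is only defined in the paper for $B\subseteq K[X]$, but you never actually need that equality since $\co_D(X^mB)\subseteq K$ already gives $B^{\sus}\subseteq X^{-m}K[X]$.
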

\begin{proof}
Let $A:= \bigcup_{n=1}^{\infty}\frac{1}{X^n}D[X]$, then $A \in
\boldsymbol{\overline{F}}(D[X])$ and $(K[X]:A) = (0)$. Hence
$A^{\blacktriangle^{\star}} = K(X)$ by definition. Next, if $B
\in\f(D[X])$ and $B \subseteq A$, then $B \subseteq
\frac{1}{X^m}D[X]$ for some  $m \geq 1$.
So $B^{\blacktriangle^{\star}} \subseteq
(\frac{1}{X^m}D[X])^{\blacktriangle^{\star}} =
\frac{1}{X^m}(D[X])^{\blacktriangle^{\star}} =
\frac{1}{X^m}D^{\star}[X] \subseteq K[X, \frac{1}{X}]$. Thus,
$A^{({\blacktriangle^{\star}})_{_{\!f}}}\subseteq K[X,
\frac{1}{X}] \subsetneq K(X) = A^{\blacktriangle^{\star}}$, which
implies $({\blacktriangle^{\star}})_{_{\!f}} \lneq
{\blacktriangle^{\star}}$.
\end{proof}

\smallskip

In the  following, we compare $\blacktriangle^{v_D}$,
$\blacktriangle^{v_D}_D$, and  $v_{D[X]}$.

\begin{corollary} \label{black-v}
Let $v_{D[X]}$ be the $v$-semistar operation of $D[X]$. Then:
\begin{enumerate}
\item $\blacktriangle^{\!v_D} \leq \blacktriangle^{\!v_D}_D = v_{D[X]}$.   Moreover,
$\blacktriangle^{\!v_D}=v_{D[X]}$ if and only if $D=K$.

\item $(v_{D[X]})_f=t_{D[X]} = (\blacktriangle^{\!v_D})_{\!{_f}} =
(\blacktriangle^{\!t_D})_{\!{_f}} \lneq \blacktriangle^{\!t_D}$.
\end{enumerate}
\end{corollary}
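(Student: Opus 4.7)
For part~(1), the inequality $\blacktriangle^{\!v_D} \leq \blacktriangle^{\!v_D}_D$ is immediate from Theorem~\ref{strictext}(3): $D \subseteq K$ are overrings of $D$ with $D^{v_D}=D$ (since $v_D$ is a (semi)star operation) and $K^{v_D}=K$. For the equality $\blacktriangle^{\!v_D}_D = v_{D[X]}$, the inequality ``$\leq$'' is automatic because, by Theorem~\ref{strictext}(2), $\blacktriangle^{\!v_D}_D$ is itself a (semi)star operation of $D[X]$, hence dominated by the maximum (semi)star operation $v_{D[X]}$.

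The reverse inequality $v_{D[X]} \leq \blacktriangle^{\!v_D}_D$ is the main computational step. Given $A \in \F(D[X])$ and $0 \neq z \in (D[X]:A)$, we have $zA \subseteq D[X]$, so $\co_D(zA)$ is a nonzero integral ideal of $D$, hence an element of $\F(D)$, and $zA \subseteq \co_D(zA)[X]$. Applying $v_{D[X]}$ together with the formula $(E[X])^{v_{D[X]}} = E^{v_D}[X]$ for $E \in \F(D)$ recalled in Section~1, I obtain $zA^{v_{D[X]}} \subseteq (\co_D(zA))^{v_D}[X]$, i.e., $A^{v_{D[X]}} \subseteq z^{-1}(\co_D(zA))^{v_D}[X]$; intersecting over $z$ yields $A^{v_{D[X]}} \subseteq A^{\blacktriangle^{\!v_D}_D}$. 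The case $A \in \FF(D[X]) \setminus \F(D[X])$ is handled by the observation that $(D[X]:A)=(0)$, so both closures equal $K(X)$.

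For the ``moreover'' in~(1), $D = K$ gives equality trivially. Conversely, assuming $D \neq K$, I test on $A = K[X]$. First, $(D[X]:K[X]) = (0)$: any nonzero $h = \sum h_i X^i \in D[X]$ with $hK[X] \subseteq D[X]$ would force $h_i K \subseteq D$ for each $i$, contradicting $D \neq K$ whenever $h_i \neq 0$. Hence $(K[X])^{v_{D[X]}} = K(X)$. On the other hand, $1 \in (K[X]:K[X])$ contributes the term $(\co_D(K[X]))^{v_D}[X] = K^{v_D}[X] = K[X]$ to the intersection defining $(K[X])^{\blacktriangle^{\!v_D}} = (K[X])^{\blacktriangle^{\!v_D}_K}$, so $(K[X])^{\blacktriangle^{\!v_D}} \subseteq K[X] \subsetneq K(X)$.

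Part~(2) follows by assembling earlier results: $(v_{D[X]})_f = t_{D[X]}$ is the definition of $t_{D[X]}$; the equalities $t_{D[X]} = (\blacktriangle^{\!v_D})_{\!{_f}} = (\blacktriangle^{\!t_D})_{\!{_f}}$ are exactly Corollary~\ref{cor9}(1); and the strict inequality $(\blacktriangle^{\!t_D})_{\!{_f}} \lneq \blacktriangle^{\!t_D}$ is Proposition~\ref{prop10} specialized to $\star = t_D$. The main obstacle throughout is the content-based calculation in the second paragraph, which rests on the classical identity $(E[X])^{v_{D[X]}} = E^{v_D}[X]$ for $E \in \F(D)$ recalled in Section~1.
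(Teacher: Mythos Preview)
Your proof is correct and follows essentially the same approach as the paper's: the inequality $\blacktriangle^{\!v_D}\leq\blacktriangle^{\!v_D}_D$ via Theorem~\ref{strictext}(3), the bound $\blacktriangle^{\!v_D}_D\leq v_{D[X]}$ because $\blacktriangle^{\!v_D}_D$ is a (semi)star operation, the reverse inequality by the content computation using $(E[X])^{v_{D[X]}}=E^{v_D}[X]$, and part~(2) via Corollary~\ref{cor9}(1) and Proposition~\ref{prop10}. For the ``moreover'' you test on $A=K[X]$, which is exactly the instance $E=K$ of the paper's observation that $(E[X])^{v_{D[X]}}=K(X)\neq K[X]=E^{v_D}[X]$ for $E\in\FF(D)\setminus\F(D)$; the paper phrases this as ``$v_{D[X]}$ is not a strict extension of $v_D$'', but the content is identical.
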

\begin{proof}
(1) The inequality  $\blacktriangle^{\!v_D}=\blacktriangle^{\!v_D}_K \leq
\blacktriangle^{\!v_D}_D$ holds by Theorem \ref{strictext}(3).
Also, since $\blacktriangle^{\!v_D}_D$ is a (semi)star operation of $D[X]$,
$\blacktriangle^{\!v_D}_D \leq v_{D[X]}$.
Now, for $A \in \F(D[X])$, we have
\begin{eqnarray*}
A^{ \blacktriangle^{\!v_D}_D}\hskip -8pt & \supseteq &  \hskip -4pt
 \bigcap\{z^{-1}(\co_D (z A))^{v_D}[X] \mid 0 \neq z \in (D[X]:A)\} \\
 \hskip -8pt &= &  \hskip -4pt   \bigcap\{z^{-1}(\co_D (z A)[X])^{v_{D[X]}} \mid 0 \neq z\in (D[X]:A)\} \\
\hskip -8pt & \supseteq &  \hskip -4pt  \bigcap\{z^{-1}(z A)^{v_{D[X]}}  \mid 0 \neq z\in (D[X]:A)\} \\
\hskip -8pt &= &  \hskip -4pt  A^{v_{D[X]}},
\end{eqnarray*}
and for $A\in \FF(D[X])\setminus \F(D[X])$, we have
$A^{v_{D[X]}}=K(X)=A^{ \blacktriangle^{\!v_D}_D}$. Thus we obtain
the equality $\blacktriangle^{\!v_D}_D = v_{D[X]}$.
If $D=K$, then obviously $\blacktriangle^{\!v_D}=\blacktriangle^{\!v_D}_K=
\blacktriangle^{\!v_D}_D = v_{D[X]}$. Assume that $D\neq K$. Then $K\in \FF(D)\setminus \F(D)$
and for each $E\in \FF(D)\setminus \F(D)$, $E[X]^{v_{D[X]}}=K(X)$ but $E^{v_D}[X]=K[X]$.
Thus $\blacktriangle^{\!v_D}_D = v_{D[X]}$
is not a strict extension of $v_D$ and hence $\blacktriangle^{\!v_D} \lneq \blacktriangle^{\!v_D}_D =
v_{D[X]}$.

(2) is an easy consequence of Corollary~\ref{cor9}(1)
 and Proposition~\ref{prop10}.
\end{proof}

\begin{remark}\label{rem-ex} \rm
(a)\, The statement (1) of the previous corollary can be
stated more precisely as follows: for each $A \in \FF(D[X])$, we have
 $$
A^{v_{D[X]}}= \left\{
\begin{array}{ll}
K(X) =A^{\blacktriangle^{\!v_D}_D} = A^{\blacktriangle^{\!v_D}} \,
& \mbox{ if }
(K[X] : A) =(0)\,, \\
K(X) = A^{\blacktriangle^{\!v_D}_D} \supsetneq
A^{\blacktriangle^{\!v_D}}  \,&  \mbox{ if }
(K[X] :A)\neq (0) \mbox{ but }  (D[X] :A) =(0)\,,\\
 A^{\blacktriangle^{\!v_D}_D} = A^{\blacktriangle^{\!v_D}}  \,&  \mbox{ if }
(D[X] :A)\neq (0). 
\end{array}
\right.
$$
%

(b)\, If $D$ is a Krull domain, then $D[X]$ is also a Krull
domain, and hence 
$t_{D[X]}$, ${\blacktriangle^{\!v_D}}$,
${\blacktriangle^{\!v_D}_D}$, and $ v_{D[X]}$ coincide when they
are restricted to $\F(D[X])$. Therefore,   the star operation
${\blacktriangle^{\!v_D}}|_{\F(D[X])}$ can be of finite type, in
contrast with the semistar operation ${\blacktriangle^{\!v_D}}$
(Proposition \ref{prop10}). On the other hand,
 if $D$ is a TV-domain such that $D[X]$ is not a TV-domain
(see \cite{CR}), then by (a), ${\blacktriangle^{t_D}}|_{\F(D[X])}
= {\blacktriangle^{v_D}}|_{\F(D[X])}=v_{D[X]}|_{\F(D[X])}\neq
t_{D[X]}|_{\F(D[X])}$. Thus ${\blacktriangle^{t_D}}|_{\F(D[X])}$
(and ${\blacktriangle^{t_D}}$) may not be of finite type.

(c)\,  Note that ${\blacktriangle}^{e_D} \lneq e_{D[X]}$
and $d_{D[X]}\leq ({\blacktriangle}^{d_D})_f$; moreover,
$d_{D[X]}=({\blacktriangle}^{d_D})_f$ if and only if $D=K$. The
first proper inequality is obvious, because
${\blacktriangle}^{e_D} \leq e_{D[X]} $  \ and
$(D[X])^{{\blacktriangle}^{e_D}} $ $= D^{e_D}[X] =K[X] \subsetneq
K(X) = (D[X])^{e_{D[X]}}$. The second inequality is also obvious,
because $d_{D[X]}$ is the smallest semistar operation of $D[X]$.
If $D=K$, then $({\blacktriangle}^{d_D})_f=
({\blacktriangle}^{v_D})_f=t_{D[X]}=d_{D[X]}$ by
Corollary~\ref{black-v}. Assume that $D\neq K$.  Let $\alpha$ be
a nonzero nonunit element of $D$ and let $A=(\alpha, X)D[X]$.
Since $(K[X]:A)=K[X]$ and $\co_D(zA)=\co_D(z)$ for all $z\in
K[X]\setminus (0)$, $ A=A^{d_{D[X]}} \subsetneq
A^{(\blacktriangle^{d_D})_{\!{_f}} }
=A^{\blacktriangle^{d_D}}=D[X]  $. In fact, since $1 \in
(K[X]:A)$, $A^{\blacktriangle^{d_D}}\subseteq D[X]$; on the other
hand, $A^{\blacktriangle^{d_D}}= \bigcap \{z^{-1} \co_D(zA)[X]
\mid 0 \neq z \in K[X]\} = \bigcap \{z^{-1} \co_D(z)[X] \mid 0
\neq z \in K[X]\} \supseteq  \bigcap \{z^{-1} zD[X]\mid 0 \neq z
\in K[X]\} =D[X]$.
%
\end{remark}

\smallskip

According to Theorem \ref{strictext}, any semistar operation $\star$ of $D$ admits a strict extension (in
fact, the largest strict extension) to $D[X]$. In the following, we show that, in fact, there
exist infinitely many strict extensions to $D[X]$ of $\star$.



%

\begin{lemma}\label{strict-ext} Let $\star$ be a semistar operation  of $D$
and let $\bigstar'$ and $\bigstar''$ be   two extensions of
$\star$ to $D[X]$. If $\bigstar' \leq \bigstar''$ and $\bigstar''$
is a strict extension of $\star$, then $\bigstar'$ is also a
strict extension of $\star$.
\end{lemma}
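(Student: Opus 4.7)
The plan is to fix an arbitrary $E \in \FF(D)$ and establish the two inclusions $E[X]^{\bigstar'} \subseteq E^{\star}[X]$ and $E^{\star}[X] \subseteq E[X]^{\bigstar'}$ separately, each by a direct unpacking of the relevant definition.

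For the first inclusion, I would use the hypothesis $\bigstar' \leq \bigstar''$ together with the assumption that $\bigstar''$ is a \emph{strict} extension of $\star$. Then
$$E[X]^{\bigstar'} \subseteq E[X]^{\bigstar''} = E^{\star}[X],$$
which is immediate.

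For the reverse inclusion, I would use the assumption that $\bigstar'$ is an extension (not a priori strict) of $\star$, which by definition gives $E[X]^{\bigstar'} \cap K = E^{\star}$. In particular $E^{\star} \subseteq E[X]^{\bigstar'}$. Since $E[X]^{\bigstar'} \in \FF(D[X])$ is a $D[X]$-submodule of $K(X)$, multiplying elements of $E^{\star}$ by arbitrary polynomials in $X$ keeps us inside $E[X]^{\bigstar'}$, so $E^{\star}[X] \subseteq E[X]^{\bigstar'}$.

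Combining the two inclusions yields $E[X]^{\bigstar'} = E^{\star}[X]$ for every $E \in \FF(D)$, which is exactly the condition that $\bigstar'$ be a strict extension of $\star$. There is no real obstacle here: the lemma is essentially a definition chase, and the only subtlety is remembering that a semistar operation of $D[X]$ returns a $D[X]$-submodule of $K(X)$, which is what allows the passage from $E^{\star} \subseteq E[X]^{\bigstar'}$ to $E^{\star}[X] \subseteq E[X]^{\bigstar'}$.
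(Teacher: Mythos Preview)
Your proof is correct and follows the same chain of inclusions as the paper's proof, namely $E^\star[X] \subseteq (E[X])^{\bigstar'} \subseteq (E[X])^{\bigstar''} = E^\star[X]$. The paper states the first inclusion without comment, whereas you spell out the justification (using the extension hypothesis on $\bigstar'$ together with the fact that $(E[X])^{\bigstar'}$ is a $D[X]$-module); this is exactly the right reasoning behind that step.
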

\begin{proof}  For each $E\in \FF(D)$, we have $E^\star[X]  \subseteq (E[X])^{\bigstar'}  \subseteq
(E[X])^{\bigstar''} = E^\star[X]$, and hence $(E[X])^{\bigstar'} =
E^\star[X]$.
\end{proof}

 \begin{proposition}
For each semistar operation $\star$ of $D$, 
there exists a strictly increasing infinite sequence of semistar
operations of $D[X]$ which are all strict extensions of $\star$.
\end{proposition}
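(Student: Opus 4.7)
My plan is to reduce the problem, via Lemma~\ref{strict-ext}, to constructing a strictly increasing sequence of extensions of $\star$ (not a priori strict) that all lie below $\sus$: since $\sus$ is the largest strict extension of $\star$ to $D[X]$ by Theorem~\ref{strictext}(4), Lemma~\ref{strict-ext} then automatically upgrades each term in the chain to a strict extension. So it suffices to build an infinite chain $\bigstar_1\lneq \bigstar_2\lneq \cdots \leq \sus$ of extensions of $\star$ to $D[X]$.

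To produce such a chain, I would first pick a sequence $\{p_n\}_{n \geq 1}$ of pairwise distinct monic irreducible polynomials in $K[X]$ (which always exists, since $K[X]$ has infinitely many maximal ideals for either finite or infinite $K$), and for each $n\geq 1$ set
$$V_n := K[X][1/(p_1 p_2\cdots p_n)] \subseteq K(X),$$
an overring of $D[X]$ with $V_n \subsetneq V_{n+1}$. I would then define
$$\bigstar_n := \sus \wedge \star_{\{V_n\}},$$
where $\star_{\{V_n\}}$ is the semistar operation $A \mapsto AV_n$ of $D[X]$. The routine checks are: (a) for $E \in \Fb(D)$, $E[X] V_n = V_n$ (since $E$ contains a nonzero element of $K\subseteq V_n$) and $E^\star[X] \subseteq K[X]\subseteq V_n$, so $(E[X])^{\bigstar_n}=E^\star[X]$, showing $\bigstar_n$ extends $\star$; (b) $V_n \subsetneq V_{n+1}$ yields $\star_{\{V_n\}} \leq \star_{\{V_{n+1}\}}$ and hence $\bigstar_n \leq \bigstar_{n+1}$; (c) trivially $\bigstar_n \leq \sus$.

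The heart of the argument is the strict inequality $\bigstar_n\lneq \bigstar_{n+1}$, for which I would produce the test ideal
$$A_n := \bigcup_{k\geq 1}(1/p_{n+2}^{\,k})\,D[X]\in \Fb(D[X]).$$
Since no nonzero polynomial in $K[X]$ is divisible by every power of $p_{n+2}$, we have $(K[X]:A_n)=(0)$, and so $A_n^{\sus}=K(X)$ by the definition of $\sus=\susK$ in Theorem~\ref{strictext}. Hence $A_n^{\bigstar_n}= A_n V_n$ and $A_n^{\bigstar_{n+1}}= A_n V_{n+1}$, and a direct computation gives $A_n V_n = V_n[1/p_{n+2}] = K[X][1/(p_1\cdots p_n p_{n+2})]$ while $A_n V_{n+1} = V_{n+1}[1/p_{n+2}] = K[X][1/(p_1\cdots p_{n+1} p_{n+2})]$. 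Since the $p_i$ are pairwise distinct irreducibles, $1/p_{n+1}$ belongs to the second ring but not to the first, giving strict containment and thus $\bigstar_n \lneq \bigstar_{n+1}$.

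The main obstacle is calibrating the test ideals $A_n$: each must be large enough that $A_n^{\sus}=K(X)$ (so that $\bigstar_n$ evaluated at $A_n$ collapses to just $A_nV_n$), yet simple enough that the difference $A_n V_n \subsetneq A_n V_{n+1}$ is visibly controlled by a single irreducible factor. Building $A_n$ from inverse powers of one irreducible $p_{n+2}$ chosen not to be already inverted in $V_n$ or $V_{n+1}$ handles both constraints at once; this is the same non-finitely generated mechanism used in the proof of Proposition~\ref{prop10}.
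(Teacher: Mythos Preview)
Your approach is essentially the same as the paper's: both construct the chain as $\sus \wedge (\cdot)$ with a monotone family of semistar operations built from overrings of $K[X]$ indexed by irreducible polynomials, and both separate consecutive terms via a test module $A$ with $(K[X]:A)=(0)$, so that $A^{\sus}=K(X)$ and the comparison collapses to the overring part. The paper uses the decreasing family $A\mapsto \bigcap_{i\geq n} AK[X]_{(f_i)}$ with test module $B=K[X]_{(f_n)}$, while you use the increasing overrings $V_n=K[X][1/(p_1\cdots p_n)]$ with a test module of inverse powers; this is different packaging of the same idea.

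One small slip: as written, $A_n=\bigcup_{k\geq 1} p_{n+2}^{-k}D[X]$ need not be a $D[X]$-submodule of $K(X)$ unless $p_{n+2}\in D[X]$, since the union is not nested otherwise (try $D=\mathbb{Z}$, $p_{n+2}=X+\tfrac{1}{2}$). The fix is harmless: either choose the $p_n$ in $D[X]$ from the start (e.g., $p_n=X-d_n$ for distinct $d_n\in D$; an integral domain that is not a field is infinite), or replace the union by the $D[X]$-submodule $\sum_{k\geq 1} p_{n+2}^{-k}D[X]$. Either way your computation $A_nV_m=V_m[1/p_{n+2}]$ and the rest of the argument go through unchanged.
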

\begin{proof} Let $\{f_i\}_{i=1}^\infty$ be a set of countably
infinite nonassociate irreducible polynomials in $K[X]$.  For each
$n\geq 1$, let $\bigstar_n:=(\bigwedge
\{\bigstar_{\{K[X]_{(f_i)}\}}\mid i\geq n\}) \wedge
\blacktriangle^\star$, i.e., for each $A\in \FF(D[X])$,
$A^{\bigstar_n}:=(\bigcap_{i\geq n} AK[X]_{(f_i)}) \cap
A^{\blacktriangle^\star}$. Then each $\bigstar_n$ is a strict
extension to $D[X]$ of $\star$  such that $\bigstar_1\leq
\bigstar_2\leq\cdots \leq \blacktriangle^\star$ (Lemma
\ref{strict-ext} and Theorem \ref{strictext}(4)). Let $n<m$ and
let $B:=K[X]_{(f_n)}$. Then $B\in \FF(D[X])$ and
$BK[X]_{(f_i)}=K(X)$ for all $i\neq n$. Since $(K[X]: B)=(0)$,
$B^{\blacktriangle^\star}=K(X)$. Therefore, $B^{\bigstar_m}=K(X)$,
while $B^{\bigstar_n}=B$. Thus $\bigstar_n\neq \bigstar_m$.
\end{proof}

\begin{remark} \rm
We do not know whether, for a given arbitrary {\sl star} operation
$\ast$ of $D$, there exists a strictly increa\-sing sequence of
(strict) {\sl star} operation extensions to $D[X]$ of $\ast$.
However, we can show that it does hold if $\ast$ is of finite type
and $D$ admits a $\ast$-valuation overring which is not equal to
the quotient field $K$.

Let $V$ be a nontrivial $\ast$-valuation overring of $D$ with
maximal ideal $M$ and let $P:=M\cap D$. Then $P$ is a nonzero
prime ideal of $D$.

{\bf Case 1.}  {\sl $D/P$ is infinite}.

Let $\{a_i\}_{i=1}^\infty$ be a set of elements of $D$ such that
$\bar a_i\neq \bar a_j$ in $D/P$ for all $i\neq j$. Then the
ideals $N_i:=M+(X-a_i)V[X]$  are distinct maximal ideals of
$V[X]$. We denote by $\blacktriangle^*_D$ the strict star
operation extension to $D[X]$ of $\ast$ (defined as in the
semistar operation case, but obviously only on nonzero fractional
ideals of $D[X]$). For each $n\geq 1$, define
$A^{{\bast}_n}:=(\bigcap_{i\geq n} AV[X]_{N_i})\cap
A^{\blacktriangle^\ast_D}$ for all $A\in \F(D[X])$. Since $V$ is a
$\ast$-valuation overring of $D$ and $\blacktriangle^\ast_D$ is a
strict star operation extension to $D[X]$ of $\ast$, each
${\Bast}_n$ is a strict star operation extension to $D[X]$ of
$\ast$ such that ${\Bast}_1\leq {\Bast}_2\leq\cdots$. Choose a
nonzero element $c\in P$ and let $A_i:=(c, X-a_i)D[X]$ for each $i
\geq 1$. Then, since $\co_D(X-a_i)=D$, we have
$A_i^{\blacktriangle^\ast_D}=D[X]$. Let $n<m$. Then
$A_n^{\bast_n}\subseteq N_nV[X]_{N_n}\cap D[X]\subseteq N_n[X]\cap
D[X]\subsetneq D[X]$, while $A_n^{\bast_m}=(\bigcap_{i\geq m}
V[X]_{N_i})\cap D[X]=D[X]$. Thus ${\Bast}_n\neq {\Bast}_m$.


{\bf Case 2.}  {\sl $D/P$ is finite.}

Since $D/P$ is a finite field, for each $n\geq 1$, there exists a
monic irreducible polynomial $\bar f_n\in (D/P)[X]$ of degree $n$.
Then, for $n\neq m$, $(\bar f_n, \bar f_m)(D/P)[X]=(D/P)[X]$, and
so, {\sl a fortiori}, $(\bar f_n, \bar f_m)(V/M)[X]=(V/M)[X]$.
Since $\bar f_n$ is a nonconstant polynomial (in $(D/P)[X]$) and
hence, in particular,  a nonunit element in $(V/M)[X]$, there
exists a maximal ideal $\bar N_n$ of $(V/M)[X]$ containing $\bar
f_n$. Note that, by the previous observations, $\bar N_n\neq \bar
N_m$ for $n\neq m$.  Let $\varphi: V[X]\rightarrow (V/M)[X]$ be
the canonical epimorphism, set $N_n:=\varphi^{-1}(\bar N_n)$, and
let $f_n$ be a monic polynomial in $D[X]$ of degree $n$ such that
$\varphi(f_n)=\bar f_n$. For each $n\geq 1$, define
$A^{\bast_n}:=(\bigcap_{i\geq n} AV[X]_{N_i})\cap
A^{\blacktriangle^*_D}$ for all $A\in \F(D[X])$. Then, as above,
each $\Bast_n$ is a strict star operation extension to $D[X]$ of
$\ast$ such that $\Bast_1\leq \Bast_2\leq\cdots$. Choose a nonzero
element $c\in P$ and set $A_n:=(c, f_n)D[X]$. Then, since
$\co_D(f_n)=D$, we have $A_n^{\blacktriangle^\ast_D}=D[X]$. Let
$n<m$. Then $A_n^{\bast_n}\subseteq N_nV[X]_{N_n}\cap
D[X]\subseteq N_n[X]\cap D[X]\subsetneq D[X]$, while
$A_n^{\bast_m}=(\bigcap_{i\geq m} V[X]_{N_i})\cap D[X]=D[X]$.
Thus $\Bast_n\neq \Bast_m$.
%
\end{remark}
{\section{Relationship among strict extensions} }


In Section 2, we have shown that each semistar operation of
$D$ admits the largest strict extension to $D[X]$, by defining its
precise form. The next proposition provides the existence of the
smallest strict extension. Unlike the largest strict
extension, the smallest strict extension  is not described in an
explicit form in general.  However, 
for a stable semistar operation of finite type, we are able to
provide a complete description of its  smallest strict extension.

\smallskip

\begin{proposition} \label{curly}
Let $D$ be an integral domain and let $\star$ be a semistar
operation of $D$.
Set $\{\star_\lambda \mid \lambda \in \Lambda \}$ the set of all
the semistar operations of $D[X]$ extending $\star$. Then
$\boldsymbol{\curlywedge}^\star := \wedge \{ \star_{\lambda} \mid
\lambda \in \Lambda \}$ is the smallest semistar operation of
$D[X]$ extending $\star$.
Moreover, it is a strict extension of $\star$.
\end{proposition}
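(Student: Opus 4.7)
The plan is to verify four points in order: (i) the family $\{\star_\lambda \mid \lambda \in \Lambda\}$ is nonempty, so the wedge is well-defined; (ii) $\boldsymbol{\curlywedge}^\star$ is a semistar operation of $D[X]$ that extends $\star$; (iii) it is the smallest such; and (iv) it is moreover a strict extension. Nonemptiness follows at once from Theorem \ref{strictext}(4): the operation $\sus = \susK$ is a strict, hence in particular an, extension of $\star$, so $\Lambda \neq \emptyset$.

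For (ii), I would first note that the wedge of any nonempty family of semistar operations is itself a semistar operation (axioms $(\star_1)$ and $(\star_2)$ pass trivially to intersections; for $(\star_3)$ one combines the inclusion $A^{\boldsymbol{\curlywedge}^\star} \subseteq A^{\star_\lambda}$ with the idempotence of each $\star_\lambda$ to get both inclusions in $(A^{\boldsymbol{\curlywedge}^\star})^{\boldsymbol{\curlywedge}^\star} = A^{\boldsymbol{\curlywedge}^\star}$). The extension property then survives the wedge because, for each $E \in \FF(D)$,
$$
E[X]^{\boldsymbol{\curlywedge}^\star} \cap K \;=\; \Bigl(\bigcap_{\lambda} E[X]^{\star_\lambda}\Bigr) \cap K \;=\; \bigcap_{\lambda} \bigl(E[X]^{\star_\lambda} \cap K\bigr) \;=\; \bigcap_{\lambda} E^\star \;=\; E^\star,
$$
using that each $\star_\lambda$ is itself an extension of $\star$. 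Item (iii) is immediate from the definition of the wedge: $A^{\boldsymbol{\curlywedge}^\star} \subseteq A^{\star_\lambda}$ for every $\lambda$ and every $A \in \FF(D[X])$.

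The strictness claim (iv) is where the earlier machinery pays off. Since $\sus$ is one of the $\star_\lambda$, we have $\boldsymbol{\curlywedge}^\star \leq \sus$, and Theorem \ref{strictext}(4) tells us that $\sus$ is a strict extension of $\star$. Combined with (ii), which gives that $\boldsymbol{\curlywedge}^\star$ is already an extension, Lemma \ref{strict-ext} immediately forces $\boldsymbol{\curlywedge}^\star$ to be a strict extension as well. The entire argument is essentially bookkeeping once these two ingredients are in hand; the only mildly delicate move is the commutation of $\cap K$ with $\bigcap_\lambda$, which is a trivial set-theoretic identity, so no real obstacle is anticipated.
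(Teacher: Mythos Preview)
Your proof is correct and follows essentially the same route as the paper: verify the extension property by commuting $\cap K$ with the intersection over $\lambda$, then deduce strictness from $\boldsymbol{\curlywedge}^\star \leq \sus$ via Lemma~\ref{strict-ext}. You are simply more explicit about the routine points (nonemptiness of $\Lambda$, the wedge being a semistar operation, minimality), which the paper leaves implicit.
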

\begin{proof}  Note that, by definition, for all $E \in \FF(D)$,
$(E[X])^{\boldsymbol{\curlywedge}^\star} = \bigcap \{(E[X])^{\star_\lambda} \mid \lambda \in \Lambda\}$.
Since $(E[X])^{\star_\lambda} \cap K = E^\star$ for each $\lambda \in \Lambda$,
we deduce immediately that $(E[X])^{\boldsymbol{\curlywedge}^\star} \cap K = E^\star$.
Also, $\sus$ is a strict extension of $\star$ to $D[X]$ by Theorem \ref{strictext}(4),
and so $\boldsymbol{\curlywedge}^\star \leq \sus$. Therefore,
$\boldsymbol{\curlywedge}^\star$ is a strict extension of $\star$
by Lemma \ref{strict-ext}.
\end{proof}

\smallskip

We know that for any semistar operation $\star$ of
$D$, $(\sus)_{\!{_{f}}}=(\susf)_{\!{_{f}}}\lneq \susf$ and
$\widetilde{\blacktriangle^{\star}}=\widetilde{\left(\sust \right)} \lneq \sust$. We will
see now what happens for the semistar operation
$\boldsymbol{\curlywedge}^\star $.

\begin{lemma}\label{leq}  Let $\star$, $\star'$ and $\star''$ be
semistar operations of $D$ 
and let $\boldsymbol{\curlywedge}^\star $ be the smallest strict extension of $\star$ to $D[X]$
introduced in {\rm Proposition \ref{curly}}. Then:
\begin{enumerate}
\item $\boldsymbol{\curlywedge}^\star =\wedge \{
\bigstar \mid \bigstar \mbox{ is a semistar operation of } D[X] \mbox{
such that }  \bigstar_0  \geq \star \}$.

\item If  $\star' \leq \star''$,  then
$\boldsymbol{\curlywedge}^{\star'} \leq
\boldsymbol{\curlywedge}^{\star''}$.

\item Every semistar operation $\bigstar$ of $D[X]$
such that $\boldsymbol{\curlywedge}^\star \leq \bigstar \leq \sus$
is a strict extension of $\star$.
\end{enumerate}
\end{lemma}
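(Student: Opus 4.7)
The plan is to treat the three parts in order, doing the real work in part~(1); parts~(2) and~(3) will then follow by short formal arguments using only (1), Proposition~\ref{curly}, and Theorem~\ref{strictext}(4).

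For part~(1), let $\Lambda := \{\bigstar \mid \bigstar_0 = \star\}$ be the family of semistar operations of $D[X]$ extending $\star$, and let $\Lambda' := \{\bigstar \mid \bigstar_0 \geq \star\}$. Clearly $\Lambda \subseteq \Lambda'$, so $\wedge \Lambda' \leq \wedge \Lambda = \boldsymbol{\curlywedge}^\star$, giving one inequality for free. For the reverse inequality, I would pick an arbitrary $\bigstar \in \Lambda'$ and form $\bigstar' := \bigstar \wedge \sus$. The key computation is that $\bigstar' \in \Lambda$: for each $E \in \FF(D)$, strictness of $\sus$ gives $(E[X])^{\sus} \cap K = E^\star$, and $\bigstar_0 \geq \star$ gives $E^\star \subseteq (E[X])^\bigstar \cap K$; intersecting these two expressions in $K$ yields $(E[X])^{\bigstar'} \cap K = E^\star$. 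Consequently $\boldsymbol{\curlywedge}^\star \leq \bigstar' \leq \bigstar$ for every $\bigstar \in \Lambda'$, and so $\boldsymbol{\curlywedge}^\star \leq \wedge \Lambda'$.

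For part~(2), note that if $\bigstar$ is any extension of $\star''$, then $\bigstar_0 = \star'' \geq \star'$, so $\bigstar$ lies in the family described in part~(1) for $\star'$; hence $\boldsymbol{\curlywedge}^{\star'} \leq \bigstar$ by (1). Taking the wedge over all such $\bigstar$ gives $\boldsymbol{\curlywedge}^{\star'} \leq \boldsymbol{\curlywedge}^{\star''}$.

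For part~(3), observe that both $\boldsymbol{\curlywedge}^\star$ and $\sus$ are strict extensions of $\star$ (by Proposition~\ref{curly} and Theorem~\ref{strictext}(4) respectively). Hence for each $E \in \FF(D)$,
\[
E^\star[X] \;=\; (E[X])^{\boldsymbol{\curlywedge}^\star} \;\subseteq\; (E[X])^{\bigstar} \;\subseteq\; (E[X])^{\sus} \;=\; E^\star[X],
\]
so equality holds throughout, showing that $(E[X])^{\bigstar} = E^\star[X]$ and $\bigstar$ is a strict extension of $\star$. The only real step is in~(1), where the essential point—and the main obstacle to making the argument more elementary—is the availability of the largest strict extension $\sus$: without $\sus$ one has no concrete way to turn an operation $\bigstar$ with $\bigstar_0 \geq \star$ into a genuine extension of $\star$ dominated by $\bigstar$.
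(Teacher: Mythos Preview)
Your proof is correct in all three parts, but your route through part~(1) differs from the paper's and is less direct. The paper never invokes $\sus$ here: writing $\blacklozenge := \wedge\{\bigstar \mid \bigstar_0 \geq \star\}$, it simply observes that $\star \leq \blacklozenge_0$ (because each $\bigstar$ in the family satisfies $(E[X])^\bigstar \cap K \supseteq E^\star$, hence so does their intersection) and that $\blacklozenge_0 \leq (\boldsymbol{\curlywedge}^\star)_0 = \star$ (from $\blacklozenge \leq \boldsymbol{\curlywedge}^\star$); thus $\blacklozenge_0 = \star$, so $\blacklozenge$ is itself an extension of $\star$, and the minimality of $\boldsymbol{\curlywedge}^\star$ forces $\boldsymbol{\curlywedge}^\star \leq \blacklozenge$. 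So your closing remark is a bit off: there is no need to convert each individual $\bigstar$ with $\bigstar_0 \geq \star$ into a genuine extension of $\star$, because the wedge $\blacklozenge$ already is one. Your detour through $\bigstar \wedge \sus$ works fine, but it does extra labour that the paper avoids. Parts~(2) and~(3) match the paper's arguments; the paper cites Lemma~\ref{strict-ext} for (3), which is precisely the sandwich you wrote out.
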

\begin{proof} (1) Set $ \blacklozenge := \wedge \{ \bigstar
\mid \bigstar \mbox{ is a semistar operation of } D[X] \mbox{ such that
} \bigstar_0  \geq \star \}$. It is obvious that $\blacklozenge
\leq \boldsymbol{\curlywedge}^\star. $ From this inequality, we
obtain that  $\star \leq \blacklozenge_0 \leq
(\boldsymbol{\curlywedge}^\star)_0 = \star$ and therefore
$\blacklozenge$ is an extension of $\star$ to $D[X]$. By the
minimality of $\boldsymbol{\curlywedge}^\star$, we deduce that
$\blacklozenge$ coincides with $\boldsymbol{\curlywedge}^\star$.

(2) is a straightforward consequence of (1).

(3) is an easy consequence of Lemma \ref{strict-ext}.
\end{proof}

\begin{proposition}\label{curlyft}
Let $\star$ be a semistar operation of $D$
and let $\boldsymbol{\curlywedge}^\star $ be the semistar
operation of $D[X]$ introduced in {\rm Proposition \ref{curly}}. Then:
\begin{enumerate}
\item $\boldsymbol{\curlywedge}^{\stf}$ is a semistar operation
of finite type {\rm(}and hence $\boldsymbol{\curlywedge}^{\stf}
=(\boldsymbol{\curlywedge}^{\stf})_{\!{_{f}}}\leq
(\boldsymbol{\curlywedge}^{\star})_{\!{_{f}}} \leq
(\sus)_{\!{_{f}}}=(\susf)_{\!{_{f}}}\lneq \susf${\rm)}.

\item $\boldsymbol{\curlywedge}^{\stt}$ is a stable
semistar operation of finite type  {\rm(}and hence
$\boldsymbol{\curlywedge}^{\stt}=\widetilde{\ \boldsymbol{\curlywedge}^{\stt}} \leq
\widetilde{\boldsymbol{\curlywedge}^{\star}} \leq
\widetilde{\blacktriangle^{\star}}=\widetilde{\left(\sust \right)}
 \lneq \sust${\rm)}.
\end{enumerate}
\end{proposition}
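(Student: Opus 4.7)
The plan is to exploit the minimality property of $\boldsymbol{\curlywedge}^{(-)}$ (Proposition~\ref{curly}, Lemma~\ref{leq}) combined with the fact, recalled from Section~1 via \cite[Lemma 5]{CF2011}, that $(\bigstar_{\!{_{f}}})_0 = (\bigstar_0)_{\!{_{f}}}$ and $(\widetilde{\bigstar})_0 = \widetilde{\,\bigstar_0\,}$ for any semistar operation $\bigstar$ of $D[X]$. The bracketed inequality chains then fall out of Lemma~\ref{leq}(2), Theorem~\ref{blacktriangle-f}, and Proposition~\ref{prop10}, so no new work is needed for them.

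For part (1), I would apply the identity $(\bigstar_{\!{_{f}}})_0 = (\bigstar_0)_{\!{_{f}}}$ to the semistar operation $\bigstar := \boldsymbol{\curlywedge}^{\stf}$ of $D[X]$. Since $\boldsymbol{\curlywedge}^{\stf}$ extends $\stf$ by Proposition~\ref{curly}, its $0$-trace is $\stf$, and therefore
\[
\bigl(\boldsymbol{\curlywedge}^{\stf}\bigr)_{\!{_{f}}}\bigr)_0 \;=\; (\stf)_{\!{_{f}}} \;=\; \stf,
\]
so $(\boldsymbol{\curlywedge}^{\stf})_{\!{_{f}}}$ is also an extension of $\stf$ to $D[X]$. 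The minimality of $\boldsymbol{\curlywedge}^{\stf}$ forces $\boldsymbol{\curlywedge}^{\stf} \leq (\boldsymbol{\curlywedge}^{\stf})_{\!{_{f}}}$, and the reverse inequality is automatic, hence $\boldsymbol{\curlywedge}^{\stf} = (\boldsymbol{\curlywedge}^{\stf})_{\!{_{f}}}$ is of finite type.

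For part (2), the argument is entirely analogous, using $(\widetilde{\bigstar})_0 = \widetilde{\,\bigstar_0\,}$ applied to $\bigstar := \boldsymbol{\curlywedge}^{\stt}$. Here the extra ingredient is the fact, recalled in Section~1 from \cite[Corollary 3.9(2)]{FH2000}, that a stable semistar operation of finite type is its own tilde; since $\stt$ is such an operation, $\widetilde{\stt} = \stt$. Consequently
\[
\bigl(\widetilde{\boldsymbol{\curlywedge}^{\stt}}\bigr)_0 \;=\; \widetilde{\,(\boldsymbol{\curlywedge}^{\stt})_0\,} \;=\; \widetilde{\stt} \;=\; \stt,
\]
so $\widetilde{\boldsymbol{\curlywedge}^{\stt}}$ is an extension of $\stt$ to $D[X]$, and minimality again yields $\boldsymbol{\curlywedge}^{\stt} = \widetilde{\boldsymbol{\curlywedge}^{\stt}}$, proving stability and finite type simultaneously.

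Finally, the chains inside the parentheses are immediate: the first inequalities $\boldsymbol{\curlywedge}^{\stf} \leq (\boldsymbol{\curlywedge}^{\star})_{\!{_{f}}}$ and $\boldsymbol{\curlywedge}^{\stt} \leq \widetilde{\boldsymbol{\curlywedge}^{\star}}$ come from Lemma~\ref{leq}(2) (since $\stf \leq \star$ and $\stt \leq \star$) together with the just-proved equalities; the next inequalities follow by taking $(-)_{\!{_{f}}}$ or $\widetilde{(-)}$ of $\boldsymbol{\curlywedge}^{\star} \leq \sus$; the equalities $(\sus)_{\!{_{f}}} = (\susf)_{\!{_{f}}}$ and $\widetilde{\sus} = \widetilde{\blacktriangle^{\tilde{\star}}}$ come from Theorem~\ref{blacktriangle-f}; and the strict inequalities at the end come directly from Proposition~\ref{prop10}. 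There is no real obstacle here: the whole proposition is essentially a two-line application of minimality plus the $0$-trace identities for $(-)_{\!{_{f}}}$ and $\widetilde{(-)}$.
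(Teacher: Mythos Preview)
Your proposal is correct and follows essentially the same route as the paper: both arguments invoke the identities $(\bigstar_{\!{_f}})_0=(\bigstar_0)_{\!{_f}}$ and $(\widetilde{\bigstar})_0=\widetilde{\,\bigstar_0\,}$ from \cite[Lemma~5]{CF2011}, observe that $(\boldsymbol{\curlywedge}^{\stf})_{\!{_f}}$ (respectively $\widetilde{\boldsymbol{\curlywedge}^{\stt}}$) is therefore again an extension of $\stf$ (respectively $\stt$), and conclude equality by the minimality of $\boldsymbol{\curlywedge}^{(-)}$. The parenthetical chains are handled the same way in both, via Lemma~\ref{leq}(2), Theorem~\ref{blacktriangle-f}, and Proposition~\ref{prop10}.
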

\begin{proof}
(1) By \cite[Lemma 5]{CF2011}, we have $\stf= (\stf)_{\!{_{f}}}=
((\boldsymbol{\curlywedge}^{\stf})_0)_{\!{_{f}}} =
((\boldsymbol{\curlywedge}^{\stf})_{\!{_{f}}})_0\leq
(\boldsymbol{\curlywedge}^{\stf})_0 = \stf$, and hence
$(\boldsymbol{\curlywedge}^{\stf})_{\!{_{f}}} \leq
\boldsymbol{\curlywedge}^{\stf}$ are both extensions of $\stf$.
By the minimality of $\boldsymbol{\curlywedge}^{\stf}$, they
must be equal. Thus $\boldsymbol{\curlywedge}^{\stf}$ is a strict
extension of $\stf$ of finite type. The parenthetical statement is
a straightforward consequence of  Theorem \ref{blacktriangle-f}(1)
and Proposition \ref{curly}.

(2) By \cite[Lemma 5]{CF2011}, $\left(\widetilde{\ \boldsymbol{\curlywedge}^{\stt}}\right)_{\! 0}
= \widetilde  {\ \! ({\boldsymbol{\curlywedge}^{\stt}})_{0} }=
\widetilde{\stt} =\stt
$, and hence $\widetilde{\ \boldsymbol{\curlywedge}^{\stt}} \leq
{\boldsymbol{\curlywedge}^{\stt}}$ are both extensions of $\stt$.
 Then, by the minimality of ${\boldsymbol{\curlywedge}^{\stt}}$,
we have $\widetilde{\ \boldsymbol{\curlywedge}^{\stt}} =
{\boldsymbol{\curlywedge}^{\stt}} $. Thus ${\boldsymbol{\curlywedge}^{\stt}} $
is stable and of finite type. The parenthetical statement is a
straightforward consequence of  Theorem \ref{blacktriangle-f}(2)
and Proposition \ref{curly}.
\end{proof}

\begin{remark} \label{remark4.8} \rm
(a)\, It can happen that
$(\boldsymbol{\curlywedge}^{\star})_{\!{_{f}}} \lneq
(\sus)_{\!{_{f}}}$. For instance, if $D$ is not a field, then
$\boldsymbol{\curlywedge}^{d_D}=d_{D[X]}\lneq
(\blacktriangle^{d_D})_{\!{_{f}}}$ (see Remark~\ref{rem-ex}(c)).
But, at the moment, we do not know if it is possible that
$\boldsymbol{\curlywedge}^{\stf} \lneq
(\boldsymbol{\curlywedge}^{\star})_{\!{_{f}}}$.

(b)\, It can happen that $\widetilde{\boldsymbol{\curlywedge}^{\star}}
\lneq \widetilde{\blacktriangle^{\star}}$. For instance, let $D$
be an integral domain, not a field, with $d_D = w_D$. Then
$\boldsymbol{\curlywedge}^{w_D} = d_{D[X]}$,
$\widetilde{(\blacktriangle^{w_D})} = w_{D[X]}$ by
Corollary~\ref{cor9}, but $d_{D[X]} \neq w_{D[X]}$. So
$\boldsymbol{\curlywedge}^{w_D}=\widetilde{\boldsymbol{\curlywedge}^{w_D}}
\lneq \widetilde{\blacktriangle^{w_D}}$. On the other hand, we do
not know whether  it  is possible that
$\boldsymbol{\curlywedge}^{\stt}\lneq
\widetilde{\boldsymbol{\curlywedge}^{\star}}$.
\end{remark}

\smallskip

Let $\star$ be a semistar operation of $D$. 
In the paper \cite{CF2011}, the authors introduced the following semistar operations 
$[\stt]$ and $\langle{\stt}\rangle$ of $D[X]$: for each $A \in \FF(D[X])$,
$$
\begin{array}{rl}
A^{[\stt]} := & \hskip -6 pt \bigcap \{ AD_Q[X]\mid Q \in \QMax^{\stf}(D)\}\,,\\
A^{\langle{\stt}\rangle} :=& \hskip -6pt \bigcap \{ AD_Q(X) \mid Q
\in\QMax^{\stf} \} \cap AK[X].
\end{array}
$$
They showed that both are stable finite-type strict extensions of $\stt$ \cite[Corollary 18]{CF2011}.

We will compare  $\boldsymbol{\curlywedge}^{\stt}$, $\widetilde{\blacktriangle^{\star}}$,
$[\stt]$, and $\langle{\stt}\rangle$.

\begin{theorem} \label{ok-stable}
Let $\star$ be a semistar operation of $D$.
Then $\boldsymbol{\curlywedge}^{\stt}=[\stt]$, i.e.,
$$A^{\boldsymbol{\curlywedge}^{\stt}}=\bigcap\{ AD_Q[X]\mid Q \in \QMax^{\stf}(D)\}$$ for each $A \in \FF(D[X])$.
\end{theorem}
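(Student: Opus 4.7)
The plan is to prove the two inequalities $\boldsymbol{\curlywedge}^{\stt} \leq [\stt]$ and $\boldsymbol{\curlywedge}^{\stt} \geq [\stt]$ separately. The former is immediate: $[\stt]$ is a stable finite-type strict extension of $\stt$ by \cite[Corollary 18]{CF2011}, and Proposition~\ref{curly} says $\boldsymbol{\curlywedge}^{\stt}$ is the smallest strict extension of $\stt$; hence $\boldsymbol{\curlywedge}^{\stt} \leq [\stt]$. All the content lies in showing the reverse inclusion $A^{[\stt]} \subseteq A^{\boldsymbol{\curlywedge}^{\stt}}$ for every $A \in \FF(D[X])$.

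The decisive input is Proposition~\ref{curlyft}(2), which ensures that $\boldsymbol{\curlywedge}^{\stt}$ is itself stable and of finite type. Consequently, by \cite[Corollary 3.9(2)]{FH2000}, it coincides with its associated spectral operation; that is,
$$A^{\boldsymbol{\curlywedge}^{\stt}} \;=\; \bigcap \bigl\{AD[X]_{\mathfrak{P}} \mid \mathfrak{P} \in \Delta\bigr\}, \qquad \text{where } \Delta := \QMax^{\boldsymbol{\curlywedge}^{\stt}}(D[X]).$$
It therefore suffices, for each $\mathfrak{P} \in \Delta$, to verify $A^{[\stt]} \subseteq AD[X]_{\mathfrak{P}}$.

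Fix such $\mathfrak{P}$ and set $Q := \mathfrak{P} \cap D$. If $Q = (0)$, then $K[X] \subseteq D[X]_{\mathfrak{P}}$, so $A^{[\stt]} \subseteq AK[X] \subseteq AD[X]_{\mathfrak{P}}$ trivially. When $Q \neq (0)$, the plan is to show that $Q$ is a proper quasi-$\stt$-prime of $D$, whence $Q \subseteq Q'$ for some $Q' \in \QMax^{\stf}(D)$. For the first assertion: given $x \in Q^{\stt} \cap D$, the strict-extension identity $(Q[X])^{\boldsymbol{\curlywedge}^{\stt}} = Q^{\stt}[X]$ combined with $Q[X] \subseteq \mathfrak{P}$ yields $x \in Q^{\stt}[X] = (Q[X])^{\boldsymbol{\curlywedge}^{\stt}} \subseteq \mathfrak{P}^{\boldsymbol{\curlywedge}^{\stt}}$, and since $\mathfrak{P}$ is a quasi-$\boldsymbol{\curlywedge}^{\stt}$-ideal while $x \in D[X]$, one concludes $x \in \mathfrak{P} \cap D = Q$. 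The containment in some $Q' \in \QMax^{\stf}(D)$ is then forced: otherwise $Q D_{Q''} = D_{Q''}$ for every $Q'' \in \QMax^{\stf}(D)$, giving $Q^{\stt} \supseteq D$ and hence the contradiction $Q = Q^{\stt} \cap D = D$. With $Q \subseteq Q'$ in hand, $D\setminus Q' \subseteq D[X]\setminus \mathfrak{P}$ yields $D_{Q'}[X] \subseteq D[X]_{\mathfrak{P}}$, and therefore $A^{[\stt]} \subseteq AD_{Q'}[X] \subseteq AD[X]_{\mathfrak{P}}$, as desired.

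The main obstacle I anticipate is precisely this contraction argument: one must exploit the \emph{strict}-extension property $(E[X])^{\boldsymbol{\curlywedge}^{\stt}} = E^{\stt}[X]$ (rather than merely the weaker extension property $(\boldsymbol{\curlywedge}^{\stt})_0 = \stt$) in order to force the contraction of any quasi-$\boldsymbol{\curlywedge}^{\stt}$-maximal of $D[X]$ to land inside a quasi-$\stf$-maximal of $D$. Once this is in place, the rest is a short localization calculation matching the spectral representation of $\boldsymbol{\curlywedge}^{\stt}$ against the defining expression of $[\stt]$.
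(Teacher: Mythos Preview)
Your proof is correct. Both your argument and the paper's rest on the same decisive input, Proposition~\ref{curlyft}(2), guaranteeing that $\boldsymbol{\curlywedge}^{\stt}$ is stable of finite type; from there the two arguments diverge. The paper exploits the \emph{colon-union} description of a stable finite-type operation: writing
\[
A^{\boldsymbol{\curlywedge}^{\stt}} \;=\; \bigcup\bigl\{(A:J)\;\bigm|\; J\in\f(D[X]),\ J\subseteq D[X],\ J^{\boldsymbol{\curlywedge}^{\stt}}=D^{\stt}[X]\bigr\}
\]
and then restricting the union to extended ideals $J=H[X]$ with $H\in\f(D)$ (where the strict-extension identity reduces the side condition to $H^{\stt}=D^{\stt}$) yields $A^{[\stt]}$ directly as a subset. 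You instead use the dual \emph{spectral} description and carry out a case analysis on the contraction $\mathfrak{P}\cap D$ of each quasi-$\boldsymbol{\curlywedge}^{\stt}$-maximal ideal $\mathfrak{P}$ of $D[X]$, showing that every nonzero contraction sits inside some $Q'\in\QMax^{\stf}(D)$, whence $D_{Q'}[X]\subseteq D[X]_{\mathfrak{P}}$. The paper's route is shorter and avoids any case distinction; your route is more geometric and makes explicit how the quasi-maximal spectrum of $\boldsymbol{\curlywedge}^{\stt}$ relates to that of $[\stt]$, a piece of information the paper's computation leaves implicit. Note, incidentally, that the ``main obstacle'' you flag (the contraction argument via the strict-extension property) is specific to your spectral approach; in the paper's argument the strict-extension property enters only through the single identification $(H[X])^{\boldsymbol{\curlywedge}^{\stt}}=H^{\stt}[X]$ in the union.
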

\begin{proof}
Since $[\stt]$  is an extension of $\stt$ to $D[X]$, it suffices to show that
$[\stt]\leq \boldsymbol{\curlywedge}^{\stt}$.
Note that 
$\boldsymbol{\curlywedge}^{\stt}$ and $[\stt]$ are
stable semistar operations of finite type, so
$\widetilde{\boldsymbol{\curlywedge}^{\stt}}=\boldsymbol{\curlywedge}^{\stt}$ and $\widetilde{[\stt]}=[\stt]$.
Therefore, for each $A \in \FF(D[X])$, we have:
$$
\begin{array}{rl}
A^{\boldsymbol{\curlywedge}^{\stt}}
 =&\bigcup \{(A:J) \mid J \in \f(D[X]),\ J \subseteq D[X], \mbox{ and }
J^{\boldsymbol{\curlywedge}^{\stt}}=(D[X])^{\boldsymbol{\curlywedge}^{\stt}}
=D^{\stt}[X]\} \\
\supseteq & \bigcup \{(A:H[X]) \mid H \in \f(D),\ H \subseteq D, \mbox{ and }
(H[X])^{\boldsymbol{\curlywedge}^{\stt}} =D^{\stt}[X]\}\\
= & \bigcup \{(A:H[X]) \mid H \in \f(D),\ H \subseteq D, \mbox{ and } H^{\stt}[X]
=D^{\stt}[X]\} \\
= & \bigcup \{(A:H) \mid H \in \f(D),\ H \subseteq D, \mbox{ and }
H^{\stt} =D^{\stt}\}
\\
= & A^{[\stt]}.
\end{array}$$
\noindent Thus the conclusion $\boldsymbol{\curlywedge}^{\stt}=[\stt]$ follows.
\end{proof}

\smallskip

Finally we will show that $\widetilde{\blacktriangle^{\star}}=\langle{\stt}\rangle$.
For this purpose, we need to extend a couple of results which are
known for the $t$-operation case to a more general semistar operation
setting.

Given a semistar operation $\star$ of $D$, let $v_D(D^\star)$
be the semistar operation of $D$ defined by $E^{v_D(D^\star)}
:=(ED^\star)^{v_{D^\star}}=(D^\star : (D^\star : E))$ for each $E
\in \FF(D)$ and set ${t_D(D^\star)} := {v_D(D^\star)_{\!{_f}}}$.
Then $E^{t_D(D^\star)}=(ED^\star)^{t_{D^\star}}$ for each $E \in
\FF(D)$. It is also obvious that $\star \leq v_D(D^\star)$ and $\stf
\leq t_D(D^\star)$.

\begin{lemma}\label{extended}
Let $\star$ be a semistar operation of $D$
and let $t(D^\star[X]):=t_{D[X]}(D^\star[X])$ be the semistar
operation of $D[X]$ introduced above. If $M$ is a
quasi-$t(D^\star[X])$-maximal ideal of $D[X]$ with $M\cap D \neq
(0)$, then $M = (M\cap D)[X]$.
\end{lemma}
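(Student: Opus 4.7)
The plan is to reduce to the classical Houston--Zafrullah result about $t$-maximal ideals in polynomial rings, applied to the integral domain $D^\star$ rather than to $D$. The definition $A^{t(D^\star[X])}=(AD^\star[X])^{t_{D^\star[X]}}$ suggests that every quasi-$t(D^\star[X])$-ideal of $D[X]$ is naturally the contraction of a $t_{D^\star[X]}$-ideal of $D^\star[X]$. So my strategy is to lift the given $M$ to a $t_{D^\star[X]}$-maximal ideal $N$ of $D^\star[X]$, apply the classical theorem to $N$ (noting $N\cap D^\star$ will contain $P:=M\cap D\neq (0)$), and then contract back to $D[X]$.

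First I would set $P := M \cap D$, a nonzero prime of $D$, and verify that $(MD^\star[X])^{t_{D^\star[X]}}$ is a proper ideal of $D^\star[X]$. If it were the unit ideal, its contraction $M^{t(D^\star[X])} \cap D[X] = M$ would contain $D^\star[X] \cap D[X] = D[X]$, contradicting $M \subsetneq D[X]$.

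Second, since every proper $t$-ideal is contained in a $t$-maximal ideal, I choose a $t_{D^\star[X]}$-maximal $N \supseteq (MD^\star[X])^{t_{D^\star[X]}}$ in $D^\star[X]$. A short check shows that $N \cap D[X]$ is a proper quasi-$t(D^\star[X])$-ideal of $D[X]$: the inclusion $(N \cap D[X])D^\star[X] \subseteq N$ gives
$$((N \cap D[X])D^\star[X])^{t_{D^\star[X]}} \subseteq N^{t_{D^\star[X]}} = N,$$
so $(N \cap D[X])^{t(D^\star[X])} \cap D[X] \subseteq N \cap D[X]$, whence $N\cap D[X]$ is a quasi-$t(D^\star[X])$-ideal (properness is inherited from $N$). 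Since $M \subseteq N \cap D[X]$ and both are proper quasi-$t(D^\star[X])$-ideals, the quasi-$t(D^\star[X])$-maximality of $M$ forces $M = N \cap D[X]$.

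Third, since $N \cap D^\star \supseteq M \cap D = P \neq (0)$, the classical Houston--Zafrullah theorem applied to the integral domain $D^\star$ and its polynomial extension $D^\star[X]$ yields $N = (N \cap D^\star)[X]$. Contracting to $D[X]$ then gives
$$M = N \cap D[X] = (N \cap D^\star)[X] \cap D[X] = \bigl((N\cap D^\star)\cap D\bigr)[X] = (N \cap D)[X] = P[X],$$
as required. The main obstacle is organizational rather than conceptual: one must carefully verify that properness and quasi-$t(D^\star[X])$-closedness survive the extension-to-$D^\star[X]$/contraction-to-$D[X]$ interplay. The real content of the argument is the classical Houston--Zafrullah result, which here we are permitted to invoke in the auxiliary polynomial ring $D^\star[X]$ rather than in $D[X]$ itself.
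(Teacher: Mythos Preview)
Your proposal is correct and follows essentially the same route as the paper's proof: lift $M$ to a $t_{D^\star[X]}$-maximal ideal $N$ of $D^\star[X]$, apply the Houston--Zafrullah result to $N$, and contract back to $D[X]$. You have merely spelled out two details the paper leaves implicit---the properness of $(MD^\star[X])^{t_{D^\star[X]}}$ and the verification that $N\cap D[X]$ is a quasi-$t(D^\star[X])$-ideal (needed for the maximality step $M=N\cap D[X]$).
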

\begin{proof}
Let $M$ be a quasi-$t(D^\star[X])$-maximal ideal of
$D[X]$ with $M\cap D\neq (0)$. Since
$M^{t(D^\star[X])}=(MD^\star[X])^{t_{D^\star[X]}}$ is a proper
ideal of $D^\star[X]$, there exists a $t_{D^\star[X]}$-maximal
ideal $N$ of $D^\star[X]$ containing
$(MD^\star[X])^{t_{D^\star[X]}}$. Since $N\cap D^\star\neq (0)$,
$N=(N\cap D^\star)[X]$ \cite[Proposition 1.1]{HZ2}. Therefore, it follows that $M=N\cap
D[X]=(N\cap D^\star)[X]\cap D[X]=(M\cap D)[X]$.
\end{proof}

\begin{lemma} \label{upper}
Let $\star$ be a semistar operation of $D$
and let $t(D^\star) :=t_D(D^\star)$, \linebreak $t(D^\star[X])
:=t_{D[X]}(D^\star[X])$ be as above.
If $Q$ is a nonzero prime ideal of $D[X]$ such that $Q
\cap D = (0)$ and $\co_D(Q)^{t(D^\star)} = D^\star$, then $Q$ is a
quasi-$t(D^\star[X])$-maximal ideal of $D[X]$.
\end{lemma}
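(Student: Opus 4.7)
The plan is to reduce the claim to the classical characterization of $t$-maximal uppers to zero applied inside the overring $D^\star$ of $D$, and then to use Lemma~\ref{extended} to dispose of the case in which a quasi-$t(D^\star[X])$-maximal ideal containing $Q$ contracts nontrivially to $D$.

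Since $Q \cap D = (0)$, $Q$ is an upper to zero in $D[X]$, so after clearing denominators there exists $f \in D[X]$, irreducible in $K[X]$, with $Q = fK[X] \cap D[X]$. Set $Q' := fK[X] \cap D^\star[X]$, the corresponding upper to zero in $D^\star[X]$; then $Q \subseteq Q'$, $Q' \cap D[X] = Q$, and $\co_D(Q)D^\star \subseteq \co_{D^\star}(Q') \subseteq D^\star$. Applying $t_{D^\star}$ and using the hypothesis $\co_D(Q)^{t(D^\star)} = (\co_D(Q)D^\star)^{t_{D^\star}} = D^\star$, one deduces $(\co_{D^\star}(Q'))^{t_{D^\star}} = D^\star$. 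The classical upper-to-zero theorem applied in the domain $D^\star$ then yields that $Q'$ is $t_{D^\star[X]}$-maximal, hence a (proper) $t_{D^\star[X]}$-closed ideal of $D^\star[X]$.

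From $QD^\star[X] \subseteq Q'$, together with $Q^{t(D^\star[X])} = (QD^\star[X])^{t_{D^\star[X]}}$ coming directly from the definition of $t(D^\star[X])$, we get
$$Q \subseteq Q^{t(D^\star[X])} \subseteq (Q')^{t_{D^\star[X]}} = Q',$$
so $Q^{t(D^\star[X])} \cap D[X] \subseteq Q' \cap D[X] = Q$, and hence $Q$ is a proper quasi-$t(D^\star[X])$-ideal. Since $t(D^\star[X])$ is of finite type, $Q$ is contained in some quasi-$t(D^\star[X])$-maximal ideal $M$ of $D[X]$, and it remains to show $M = Q$. If $M \cap D = (0)$, then $M$ is also an upper to zero and $QK[X] \subseteq MK[X]$ are both nonzero primes in the PID $K[X]$; they are therefore maximal, hence equal, and $M = Q$. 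Otherwise $P := M \cap D \neq (0)$ and Lemma~\ref{extended} forces $M = P[X]$; using the identity $(P[X])^{t(D^\star[X])} = P^{t(D^\star)}[X]$ (the Section~1 identity applied to the domain $D^\star$), the quasi-$t(D^\star[X])$-ideal condition on $M$ becomes $P^{t(D^\star)} \cap D = P$. But $\co_D(Q) \subseteq P$ and the hypothesis give $D^\star = \co_D(Q)^{t(D^\star)} \subseteq P^{t(D^\star)} \subseteq D^\star$, whence $P^{t(D^\star)} = D^\star$ and $P = D^\star \cap D = D$, contradicting the primality of $P$. The main difficulty is the careful bookkeeping among the four $t$-structures $t_D$, $t_{D[X]}$, $t_{D^\star}$, $t_{D^\star[X]}$, so that every relevant closure can be computed via the ordinary $t_{D^\star[X]}$-operation inside $D^\star[X]$, where the classical upper-to-zero theorem becomes directly applicable.
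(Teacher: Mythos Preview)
Your proof is correct and follows essentially the same route as the paper: first establish that $Q$ is a quasi-$t(D^\star[X])$-prime ideal, then take a quasi-$t(D^\star[X])$-maximal ideal $M \supseteq Q$, apply Lemma~\ref{extended} in the case $M\cap D\neq(0)$ to get $M=(M\cap D)[X]$, and reach a contradiction via the content hypothesis. The only difference is that the paper declares the quasi-prime step ``clear'' (implicitly using only that every upper to zero in $D^\star[X]$ is a $t_{D^\star[X]}$-ideal), whereas you invoke the stronger classical fact that $Q'$ is $t_{D^\star[X]}$-\emph{maximal}---more than you actually use, since afterward you only exploit $(Q')^{t_{D^\star[X]}}=Q'$.
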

\begin{proof} It is clear that $Q$ is a quasi-$t(D^\star[X])$-prime ideal of
$D[X]$. Suppose $Q$ is not a quasi-$t(D^\star[X])$-maximal ideal
of $D[X]$ and let $M$ be a quasi-$t(D^\star[X])$-maximal ideal of
$D[X]$ with $Q \subseteq M$. Since the containment is proper, $M
\cap D \neq (0)$. By Lemma \ref{extended}, $M= (M\cap D)[X]$. Note
that $M=M^{t(D^\star[X])}=(M\cap D)^{t(D^\star)}[X]$ and hence
that $(M\cap D)^{t(D^\star)}=M\cap D$. Since $Q \subseteq M$, we
have $\co_D(Q) \subseteq \co_D(M)=M\cap D$ and so
$\co_D(Q)^{t(D^\star)} \subseteq  (M\cap D)^{t(D^\star)}
\subsetneq D^\star$, which is a contradiction.
\end{proof}

\begin{proposition} \label{qmax-black}
Let $\star$ be a semistar operation of $D$.
Then
$$
\begin{array}{rl}
\QMax^{({\blacktriangle^{\stf}})_{f}}(D[X])  & \hskip -5pt=
 \{Q \in \Spec
(D[X]) \mid  Q \cap D = (0)  \mbox{ and } \co_D(Q)^{\stf} =
D^{\stf}\}
\\
 & \hskip 5pt  \bigcup \,\{P[X] \mid P \in \QMax^{\stf}(D)\}\,.
\end{array}
$$
\end{proposition}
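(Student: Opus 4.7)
Set $\ba := ({\blacktriangle^{\stf}})_f$. By Theorem~\ref{blacktriangle-f}(1), $\ba$ is a finite-type strict extension of $\stf$, so $(E[X])^\ba = E^\stf[X]$ for every $E \in \FF(D)$, and in particular $(D[X])^\ba = D^\stf[X]$. My strategy is to reduce the classification to one of $\QMax^{t(D^\stf[X])}(D[X])$, where Lemmas~\ref{extended} and~\ref{upper} are already available. The key preliminary step is the inequality $\ba \leq t(D^\stf[X])$: since $(D^\stf[X])^\ba = D^\stf[X]$, the restriction of $\ba$ to $\FF(D^\stf[X])$ is a finite-type (semi)star operation on the overring $D^\stf[X]$ and is therefore bounded by $t_{D^\stf[X]}$; pulling this back via $A \subseteq AD^\stf[X]$ and the identity $A^{t(D^\stf[X])} = (AD^\stf[X])^{t_{D^\stf[X]}}$ yields $A^\ba \subseteq A^{t(D^\stf[X])}$ on all of $\FF(D[X])$.

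The inclusion $\QMax^\ba(D[X]) \subseteq \QMax^{t(D^\stf[X])}(D[X])$ then follows: for $Q \in \QMax^\ba(D[X])$, the proper ideal $Q^\ba$ of $D^\stf[X]$ sits inside a maximal (hence $t_{D^\stf[X]}$-closed) ideal $N$, so $N \cap D[X] \supseteq Q$ is a proper quasi-$t(D^\stf[X])$-ideal whose quasi-$t(D^\stf[X])$-maximal extension is itself a quasi-$\ba$-ideal, and must equal $Q$ by quasi-$\ba$-maximality. Combined with Lemma~\ref{extended}, every $Q \in \QMax^\ba(D[X])$ with $P := Q \cap D \neq 0$ equals $P[X]$; the strict-extension identity $(P[X])^\ba \cap D[X] = (P^\stf \cap D)[X] = P[X]$ shows $P$ is a quasi-$\stf$-ideal, and $P \in \QMax^\stf(D)$ follows because a strictly larger $P' \in \QMax^\stf(D)$ would make $P'[X]$ a proper quasi-$\ba$-ideal strictly containing $Q$. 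For the remaining $\subseteq$-subcase $Q \cap D = 0$: if $\co_D(Q)^\stf \neq D^\stf$, then $\co_D(Q) \subseteq P'$ for some $P' \in \QMax^\stf(D)$, and $Q \subseteq \co_D(Q)[X] \subseteq P'[X]$ is a strict inclusion of quasi-$\ba$-ideals, again contradicting quasi-$\ba$-maximality.

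For the reverse inclusion, let $P \in \QMax^\stf(D)$; the identity above makes $P[X]$ a proper quasi-$\ba$-ideal, and any quasi-$\ba$-maximal extension $N'$ lies in the already-classified set --- having $N' \cap D \supseteq P \neq 0$, $N'$ must be of the form $P''[X]$ with $P'' \in \QMax^\stf(D)$ containing $P$, forcing $P'' = P$ and $N' = P[X]$. For an upper to zero $Q$ with $\co_D(Q)^\stf = D^\stf$, the inequality $\stf \leq t(D^\stf)$ upgrades this to $\co_D(Q)^{t(D^\stf)} = D^\stf$, so Lemma~\ref{upper} places $Q$ in $\QMax^{t(D^\stf[X])}(D[X])$ and hence in the set of quasi-$\ba$-ideals. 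Any quasi-$\ba$-maximal $N' \supsetneq Q$ must belong to the classified set: $N' = P'[X]$ would give $\co_D(Q) \subseteq P'$ and hence $\co_D(Q)^\stf \subsetneq D^\stf$, a contradiction; if instead $N'$ is another upper to zero, then in the PID $K[X]$ both $QK[X]$ and $N'K[X]$ are height-one primes with $QK[X] \subseteq N'K[X]$, so they coincide and $Q = N'$, again impossible.

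The main technical obstacle is establishing $\ba \leq t(D^\stf[X])$ and the induced inclusion $\QMax^\ba(D[X]) \subseteq \QMax^{t(D^\stf[X])}(D[X])$; with that bridge in place, Lemmas~\ref{extended} and~\ref{upper} carry most of the remaining classification. A subtle feature is that the proposition's content condition is phrased in terms of $\stf$ rather than the larger $t(D^\stf)$: the $\stf$-form is precisely what quasi-$\ba$-maximality delivers through the $P'[X]$-contradiction, and is \emph{a priori} strictly stronger than what Lemma~\ref{upper} requires.
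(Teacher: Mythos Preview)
Your argument has a genuine gap in the $\subseteq$ direction. You assert that a maximal ideal $N$ of $D^{\stf}[X]$ is automatically $t_{D^{\stf}[X]}$-closed; this is false in general (maximal ideals need not be $t$-ideals --- think of the maximal ideal of a two-dimensional regular local ring). Worse, the inclusion $\QMax^{\ba}(D[X]) \subseteq \QMax^{t(D^{\stf}[X])}(D[X])$ that you are aiming for is itself false. Take $\star = d_D$ with $(D,\mathfrak{m})$ a two-dimensional regular local ring, so that $t(D^{\stf}[X]) = t_{D[X]}$. The strict-extension identity gives $(\mathfrak{m}[X])^{\ba} = \mathfrak{m}[X]$, and any prime $M \supsetneq \mathfrak{m}[X]$ contains some $g$ with $\co_D(g) = D$ together with a nonzero $a \in \mathfrak{m}$; a direct Dedekind--Mertens check (as in Remark~\ref{rem-ex}(c)) shows $(a,g)^{\blacktriangle^{d}} = D[X]$, so $M$ is not a quasi-$\ba$-ideal and $\mathfrak{m}[X] \in \QMax^{\ba}(D[X])$. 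But $(\mathfrak{m}[X])^{t_{D[X]}} = \mathfrak{m}^{t_D}[X] = D[X]$ since $\mathfrak{m}^{-1} = D$, so $\mathfrak{m}[X]$ is not even a quasi-$t_{D[X]}$-ideal. Your $\supseteq$ argument then also fails, since it invokes ``the already-classified set''.

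The paper does not attempt this comparison of quasi-maximal spectra. Instead it proves \emph{directly}, from the defining intersection for $\blacktriangle^{\stf}$, that any prime $M$ with $M \cap D \neq (0)$ and $\co_D(M)^{\stf} = D^{\stf}$ satisfies $M^{\ba} = (D[X])^{\ba}$: one picks $0 \neq a \in M \cap D$ and $g \in M$ with $\co_D(g)^{\stf} = D^{\stf}$, and Dedekind--Mertens gives $\co_D(z(a,g))^{\stf} = \co_D(z)^{\stf}$ for every nonzero $z \in K[X]$, whence $(a,g)^{\blacktriangle^{\stf}} = D^{\stf}[X]$. This is exactly the computation your reduction tried to bypass. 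The inequality $\ba \leq t(D^{\stf}[X])$ you established is correct and is used in the paper too, but it only transports information one way --- quasi-$t(D^{\stf}[X])$-ideals are quasi-$\ba$-ideals --- which is what makes Lemma~\ref{upper} applicable for the uppers to zero, not the converse you need.
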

\begin{proof}
Let $t(D^\star)$ and $t(D^\star[X])$ be as in Lemma \ref{upper}.
Then $\stf \leq t(D^\star)$ and $({\blacktriangle^{\stf}})_f \leq
t(D[X]^{\blacktriangle^{\stf}})=t(D^\star[X])$. Let $Q$ be a prime
ideal of $D[X]$ with $Q\cap D =(0)$ and $\co_D(Q)^{\stf} =
D^{\stf}$. Then, obviously, $\co_D(Q)^{t(D^\star)} = D^\star$, and
thus, by Lemma \ref{upper}, $Q$ is a quasi-$t(D^\star[X])$-maximal
ideal of $D[X]$. This implies that $Q$ is a
quasi-$({\blacktriangle^{\stf}})_f$-prime ideal of $D[X]$. Let $P
\in \QMax^{\stf}(D)$. Since $({\blacktriangle^{\stf}})_f$ is a
strict extension of $\stf$,
$(P[X])^{({\blacktriangle^{\stf}})_f}=P^{\stf}[X]$ and hence
$(P[X])^{({\blacktriangle^{\stf}})_f} \cap D[X]=P^{\stf}[X]\cap
D[X]=P[X]$. This implies that $P[X]$ is a
quasi-$({\blacktriangle^{\stf}})_f$-prime ideal of $D[X]$.

Therefore, for the equality of the statement, it suffices to show
that if $M$ is a prime ideal of $D[X]$ such that $M \cap D \neq
(0)$ and $\co_D(M)^{\stf} = D^{\stf}$, then
$M^{({\blacktriangle^{\stf}})_f} =
(D[X])^{({\blacktriangle^{\stf}})_f}$. Choose a nonzero $a \in M
\cap D$ and  a nonzero $g \in M$ with $\co_D(g)^{\stf} =
D^{\stf}$. Then, for each nonzero element $z\in (K[X]: (a,
g))\subseteq K[X]$, $\co_D(zg)^{\stf}=\co_D(z)^{\stf}$ by
Dedekind-Mertens Lemma, and so
$\co_D(z(a,g))^{\stf} = (\co_D(za)+\co_D(zg))^{\stf} =
(\co_D(za)^{\stf} + \co_D(zg)^{\stf})^{\stf} = (a\co_D(z)^{\stf} +
\co_D(z)^{\stf})^{\stf} = \co_D(z)^{\stf}$. Therefore, we have
$$
\begin{array}{rl}
(D[X])^{({\blacktriangle^{\stf}})_f} \supseteq
(a,g)^{{\blacktriangle^{\stf}}}
= &\hskip -5pt \bigcap\{z^{-1}(\co_D(z(a,g))^{\stf}[X] \mid 0\neq z \in (K[X] : (a, g))\}\\
= &\hskip -5pt\bigcap\{z^{-1}(\co_D(z))^{\stf}[X] \mid 0\neq z \in (K[X]:(a,g))\} \\
= &\hskip -5pt\bigcap \{z^{-1}(\co_D(z)[X])^{({\blacktriangle^{\stf}})_f} \mid 0\neq z \in (K[X]:(a,g))\} \\
\supseteq &\hskip -5pt \bigcap\{z^{-1}(zD[X])^{({\blacktriangle^{\stf})_f}} \mid  0\neq z \in (K[X]:(a,g))\} \\
= &\hskip -5pt (D[X])^{({\blacktriangle^{\stf}})_f}\,.
\end{array}
$$
Thus  $(a,g)^{({\blacktriangle^{\stf}})_f} =
(D[X])^{({\blacktriangle^{\stf}})_f}$, and hence
$M^{({\blacktriangle^{\stf}})_f} =
(D[X])^{({\blacktriangle^{\stf}})_f}$.
\end{proof}

\begin{theorem}
Let $D$ be an integral domain with quotient field $K$
and let $\star$ be a semistar operation of $D$.
Then $\widetilde{\blacktriangle^{\star}}={\langle{\stt}\rangle}$, i.e.,
$$A^{\widetilde{\blacktriangle^{\star}}}= \bigcap \{ AD_Q(X) \mid Q
\in\QMax^{\stf} \} \cap AK[X]$$ for each $A\in \FF(D[X])$.
\end{theorem}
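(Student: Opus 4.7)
The plan is to give spectral descriptions of both $\widetilde{\sus}$ and $\langle{\stt}\rangle$ and then show the two intersections coincide. Both operations are stable and of finite type—the former by Theorem~\ref{blacktriangle-f}(2), the latter as cited from \cite{CF2011}—so by the very definition of $\widetilde{\ \ }$ one has
\[
A^{\widetilde{\sus}} \,=\, \bigcap \{ AD[X]_M \mid M \in \QMax^{(\sus)_f}(D[X]) \}.
\]
Using $(\sus)_f = (\susf)_f$ from Theorem~\ref{blacktriangle-f}(1), Proposition~\ref{qmax-black} (applied to $\stf$) identifies this $\QMax$ set with $\{P[X] \mid P \in \QMax^{\stf}(D)\}\cup \{Q\in \Spec(D[X]) \mid Q\cap D = (0),\ \co_D(Q)^{\stf} = D^{\stf}\}$. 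Via the Nagata identification $D[X]_{P[X]} = D_P(X)$, this rewrites $A^{\widetilde{\sus}}$ as an intersection of the $AD_P(X)$ (for $P \in \QMax^{\stf}(D)$) with an intersection of $AD[X]_Q$ over a selected family of uppers to zero.

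Next I would recast $A^{\langle{\stt}\rangle}$ in the same spectral language. For each nonzero upper-to-zero prime $Q$ of $D[X]$, $D[X]_Q$ coincides with $K[X]_{QK[X]}$; since $K[X]$ is a PID, intersecting these localizations over all such $Q$ recovers $K[X]$, and consequently $AK[X] = \bigcap_{Q \text{ upper to zero}} AD[X]_Q$ (since both operations are of finite type, it is enough to verify this on $A\in \f(D[X])$, where the identity is standard). Therefore
\[
A^{\langle{\stt}\rangle} \,=\, \bigcap_{P \in \QMax^{\stf}(D)} AD_P(X) \;\cap\; \bigcap_{Q \text{ upper to zero}} AD[X]_Q.
\]
Comparing this with the previous description, $\langle{\stt}\rangle \le \widetilde{\sus}$ is immediate (and already follows from the maximality statement in Theorem~\ref{blacktriangle-f}(2)); the substantive point is the opposite inequality.

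The hard part is showing that the extra uppers to zero appearing in the description of $\langle{\stt}\rangle$—those $Q$ with $\co_D(Q)^{\stf} \neq D^{\stf}$—are already absorbed by the $AD_P(X)$ terms. For such a $Q$ I would check that $\co_D(Q)^{\stf} \cap D$ is a \emph{proper} quasi-$\stf$-ideal of $D$: the quasi-$\stf$-ideal condition is routine, and properness follows because $1 \in \co_D(Q)^{\stf}$ would force $D^{\stf} \subseteq \co_D(Q)^{\stf} \subseteq D^{\stf}$, contradicting $\co_D(Q)^{\stf} \neq D^{\stf}$. Hence this ideal lies inside some $P \in \QMax^{\stf}(D)$, and so $\co_D(Q) \subseteq P$, which gives $Q \subseteq P[X]$ and therefore $AD_P(X) = AD[X]_{P[X]} \subseteq AD[X]_Q$. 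Thus the factor $AD[X]_Q$ is redundant in the intersection defining $A^{\langle{\stt}\rangle}$, and the two spectral descriptions coincide, yielding $\widetilde{\sus} = \langle{\stt}\rangle$.
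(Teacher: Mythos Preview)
Your argument is correct and rests on the same backbone as the paper's proof: both use Proposition~\ref{qmax-black} to identify $\QMax^{(\sus)_f}(D[X])$ and then match this with a spectral description of $\langle\stt\rangle$. The difference is only in how the $\langle\stt\rangle$ side is handled. The paper cites \cite[Remark~20(2)]{CF2011} to get $\QMax^{\langle\stt\rangle}(D[X])$ directly (it coincides with the set in Proposition~\ref{qmax-black}) and \cite[Proposition~16]{CF2011} for $\widetilde{\langle\stt\rangle}=\langle\stt\rangle$, so the proof collapses to a one-line comparison of quasi-maximal spectra. You instead expand $AK[X]$ as $\bigcap_{Q\text{ upper to zero}} AD[X]_Q$ and prune the ``bad'' uppers to zero (those with $\co_D(Q)^{\stf}\neq D^{\stf}$) by showing each is contained in some $P[X]$ with $P\in\QMax^{\stf}(D)$, making the corresponding term redundant. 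Your route is more self-contained and avoids the external references, at the cost of a slightly longer computation; the paper's route is shorter but leans on \cite{CF2011}. Substantively the two arguments are the same.
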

\begin{proof} By Proposition \ref{qmax-black} and \cite[Remark
20(2)]{CF2011},
we have $\QMax^{({\blacktriangle^{\stf}})_{f}}(D[X]) =
\QMax^{{\langle{\stt}\rangle}}(D[X])$ and hence
$\widetilde{(\blacktriangle^{\stf})_{\!{_f}}} =
\widetilde{\,{\langle{\stt}\rangle}\,}$. By Theorem
\ref{blacktriangle-f},
$\widetilde{(\blacktriangle^{\stf})_{\!{_f}}}
=\widetilde{(\blacktriangle^{\star})_{\!{_f}}}
=\widetilde{\blacktriangle^{\star}}$, and by \cite[Proposition
16]{CF2011},
$\widetilde{\,{\langle{\stt}\rangle}\,}={\langle{\stt}\rangle}$.
Thus the conclusion $\widetilde{\blacktriangle^{\star}}
= {\langle{\stt}\rangle}$ follows.
\end{proof}


\section{Semistar Operations Defined by Families of Overrings}


In the present section, we generalize some known results
concerning the polynomial extensions of a stable finite-type
semistar operation, to the case where the semistar operation is
defined by a given family of overrings of $D$.

\begin{lemma} \label{lemma22}
Let $T$ be an overring of $D$ and let $\star_{\{T\}}$
{\rm(}respectively, $\bigstar_{\{T[X]\}}${\rm)} be the semistar operation of
$D$ {\rm(}respectively, of $D[X]${\rm)} defined by $E^{\star_{\{T\}}} := ET$
for each $E \in \FF(D)$ {\rm(}respectively, $A^{\bigstar_{\{T[X]\}}} :=
A T[X]$ for each $A \in \FF(D[X])${\rm)}. Then
$\boldsymbol{\curlywedge}^{\star_{\{T\}}} = {\bigstar_{\{T[X]\}}}
\lneq \blacktriangle^{\star_{\{T\}}} $.
\end{lemma}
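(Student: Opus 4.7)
The plan is to establish three things in turn: that $\bigstar_{\{T[X]\}}$ is a strict extension of $\star_{\{T\}}$, that it coincides with $\boldsymbol{\curlywedge}^{\star_{\{T\}}}$, and that it is strictly dominated by $\blacktriangle^{\star_{\{T\}}}$.

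First, I would verify by direct computation that $(E[X])^{\bigstar_{\{T[X]\}}} = E[X]\cdot T[X] = (ET)[X] = E^{\star_{\{T\}}}[X]$ for every $E\in\FF(D)$, so $\bigstar_{\{T[X]\}}$ is indeed a strict extension of $\star_{\{T\}}$. The inequality $\boldsymbol{\curlywedge}^{\star_{\{T\}}}\leq \bigstar_{\{T[X]\}}$ then follows immediately from the minimality property in Proposition \ref{curly}.

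For the reverse inequality $\bigstar_{\{T[X]\}}\leq \boldsymbol{\curlywedge}^{\star_{\{T\}}}$, I would invoke the general fact that for any semistar operation $\bigstar$ of a domain $R$ and any $A\in\FF(R)$, the module $A^\bigstar$ is automatically an $R^\bigstar$-module. This follows from axiom $(\star_1)$: for $y\in A^\bigstar\setminus\{0\}$, the inclusion $Ry\subseteq A^\bigstar$ gives $yR^\bigstar=(Ry)^\bigstar\subseteq (A^\bigstar)^\bigstar=A^\bigstar$. Now apply this with $R=D[X]$ and $\bigstar=\boldsymbol{\curlywedge}^{\star_{\{T\}}}$: since by Proposition \ref{curly} this is a strict extension of $\star_{\{T\}}$, we have $(D[X])^{\boldsymbol{\curlywedge}^{\star_{\{T\}}}}=D^{\star_{\{T\}}}[X]=T[X]$. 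Therefore $A^{\boldsymbol{\curlywedge}^{\star_{\{T\}}}}$ is a $T[X]$-module containing $A$, and hence contains $AT[X]=A^{\bigstar_{\{T[X]\}}}$. This is the only nontrivial point in the argument, and I expect it to be the main obstacle.

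Finally, for the strict inequality with $\blacktriangle^{\star_{\{T\}}}$, observe that $\bigstar_{\{T[X]\}}$ is manifestly of finite type, since $AT[X]=\bigcup\{BT[X]\mid B\in\f(D[X]),\ B\subseteq A\}$ for every $A\in\FF(D[X])$. On the other hand, Proposition \ref{prop10} asserts that $\blacktriangle^{\star_{\{T\}}}$ is never of finite type. The inequality $\bigstar_{\{T[X]\}}\leq \blacktriangle^{\star_{\{T\}}}$ already follows from Theorem \ref{strictext}(4), because $\blacktriangle^{\star_{\{T\}}}$ is the largest strict extension of $\star_{\{T\}}$; the two sides cannot coincide, so the inequality is strict.
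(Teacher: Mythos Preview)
Your proof is correct and follows essentially the same approach as the paper's: both verify that $\bigstar_{\{T[X]\}}$ is a strict extension, use the minimality of $\boldsymbol{\curlywedge}^{\star_{\{T\}}}$ for one inequality, and obtain the reverse inequality from the fact that $A^{\boldsymbol{\curlywedge}^{\star_{\{T\}}}}$ is a module over $(D[X])^{\boldsymbol{\curlywedge}^{\star_{\{T\}}}}=T[X]$ (the paper phrases this as $(AD[X])^{\boldsymbol{\curlywedge}}\supseteq A\,(D[X])^{\boldsymbol{\curlywedge}}$, which is the same observation). The strict inequality is handled identically via Proposition~\ref{prop10} and the finite-type property of $\bigstar_{\{T[X]\}}$.
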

\begin{proof} It is clear that
$\boldsymbol{\curlywedge}^{\star_{\{T\}}}$,
${\bigstar_{\{T[X]\}}}$, and $\blacktriangle^{\star_{\{T\}}} $ are
all strict extensions of $\star_{\{T\}}$.
Since $\bigstar_{\{T[X]\}}$ is of finite type but
$\blacktriangle^{\star_{\{T\}}}$ is not of finite type
(Proposition \ref{prop10}), we have
$\boldsymbol{\curlywedge}^{\star_{\{T\}}} \leq
{\bigstar_{\{T[X]\}}} \lneq \blacktriangle^{\star_{\{T\}}}$. For
the equality $\boldsymbol{\curlywedge}^{\star_{\{T\}}} =
{\bigstar_{\{T[X]\}}}$, let $A \in \FF(D[X])$. Then
$A^{\boldsymbol{\curlywedge}^{\!\star_{\{T\}}}}=
(AD[X])^{\boldsymbol{\curlywedge}^{\!\star_{\{T\}}}}\supseteq
A(D[X])^{\boldsymbol{\curlywedge}^{\!\star_{\{T\}}}}=
AD^{\star_{\{T\}}}[X]=AT[X]=A^{{\bigstar_{\{T[X]\}}}}$. This
implies $\boldsymbol{\curlywedge}^{\star_{\{T\}}} \geq
{\bigstar_{\{T[X]\}}}$. Therefore, the equality
$\boldsymbol{\curlywedge}^{\star_{\{T\}}} = {\bigstar_{\{T[X]\}}}$
holds.
\end{proof}

\smallskip

Now we consider a semistar operation given by 
an arbitrary family of overrings.  Let $\TT := \{ T_{\lambda} \mid
\lambda \in \Lambda \}$ be a set of overrings of $D$, and let
$\wedge_{\TT} := \bigwedge \{ {\star_{\{T_{\lambda}\}}} \mid
\lambda \in \Lambda \}$,
 i.e., $E^{\wedge_{\TT}}: =
\bigcap_{\lambda}ET_{\lambda}$ for each $E \in \FF(D)$.
Let $\TT[X] := \{ T_{\lambda}[X]  \mid \lambda \in \Lambda \}$ and let
$\bigwedge_{\TT[X]} := \bigwedge \{
{\bigstar_{\{T_{\lambda}[X]\}}}
\mid \lambda \in \Lambda \}$,  i.e., $A^{\bigwedge_{\TT[X]}}: =
\bigcap_{\lambda}AT_{\lambda}[X]$ for each $A \in \FF(D[X])$.

\begin{proposition}\label{prop23}
With the notation recalled above,  we have
\begin{center}
$\boldsymbol{\curlywedge}^{\wedge_{\TT}}  \leq  {{\bigwedge}}_{\TT[X]}
\leq \blacktriangle^{\wedge_{\TT}}\,. $
\end{center}
\end{proposition}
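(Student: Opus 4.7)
The plan is to prove both inequalities in one stroke by verifying that the intermediate operation $\bigwedge_{\TT[X]}$ is in fact a \emph{strict} extension of $\wedge_{\TT}$. Once established, the left inequality follows from the minimality of $\boldsymbol{\curlywedge}^{\wedge_{\TT}}$ among all extensions of $\wedge_{\TT}$ (Proposition \ref{curly}), and the right inequality follows from the maximality of $\blacktriangle^{\wedge_{\TT}}$ among all strict extensions of $\wedge_{\TT}$ (Theorem \ref{strictext}(4)).

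The computation proceeds as follows. Fix $E\in\FF(D)$. By definition,
$$(E[X])^{\bigwedge_{\TT[X]}} \;=\; \bigcap_{\lambda\in\Lambda} E[X]\,T_\lambda[X].$$
For each $\lambda$, I would verify the polynomial identity $E[X]\cdot T_\lambda[X]=ET_\lambda[X]$: the containment $\subseteq$ is immediate from expanding a product of two polynomials and using that $T_\lambda$ is a ring, while for $\supseteq$ any polynomial $\sum_k a_k X^k$ with $a_k=\sum_j e_{k,j}t_{k,j}\in ET_\lambda$ can be written as $\sum_{k,j}e_{k,j}\cdot(t_{k,j}X^k)\in E[X]\cdot T_\lambda[X]$.

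Next, the elementary observation that a polynomial over $K$ lies in $\bigcap_\lambda ET_\lambda[X]$ precisely when each of its coefficients lies in $\bigcap_\lambda ET_\lambda=E^{\wedge_{\TT}}$ yields
$$\bigcap_{\lambda} ET_\lambda[X] \;=\; \Big(\bigcap_{\lambda} ET_\lambda\Big)[X] \;=\; E^{\wedge_{\TT}}[X].$$
Combining the two displays gives $(E[X])^{\bigwedge_{\TT[X]}} = E^{\wedge_{\TT}}[X]$ for all $E\in\FF(D)$, which is exactly the defining condition for $\bigwedge_{\TT[X]}$ to be a strict extension of $\wedge_{\TT}$.

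With this in hand, both inequalities follow at once: the strict extension property makes $\bigwedge_{\TT[X]}$ an extension of $\wedge_{\TT}$, so $\boldsymbol{\curlywedge}^{\wedge_{\TT}}\leq \bigwedge_{\TT[X]}$ by Proposition \ref{curly}; and it makes $\bigwedge_{\TT[X]}$ a strict extension, so $\bigwedge_{\TT[X]}\leq \blacktriangle^{\wedge_{\TT}}$ by Theorem \ref{strictext}(4). There is no essential obstacle; the only point that requires care is the interchange of the intersection $\bigcap_\lambda$ with the operation "form polynomials whose coefficients lie in\ldots," which is verified by the coefficient-wise argument above.
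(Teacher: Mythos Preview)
Your proposal is correct and follows essentially the same approach as the paper: verify that $\bigwedge_{\TT[X]}$ is a strict extension of $\wedge_{\TT}$ via the computation $(E[X])^{\bigwedge_{\TT[X]}}=\bigcap_\lambda ET_\lambda[X]=E^{\wedge_{\TT}}[X]$, then invoke the minimality of $\boldsymbol{\curlywedge}^{\wedge_{\TT}}$ and the maximality of $\blacktriangle^{\wedge_{\TT}}$. The paper's proof is terser (it simply asserts the strict-extension identity ``from the definitions''), whereas you spell out the two ingredients $E[X]T_\lambda[X]=ET_\lambda[X]$ and $\bigcap_\lambda ET_\lambda[X]=\big(\bigcap_\lambda ET_\lambda\big)[X]$, but the argument is the same.
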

\begin{proof}
We easily deduce from the definitions that, for each $E \in
\FF(D)$,  $$ (E^{\star_{\TT}}[X])^{\bigwedge_{\TT[X]}} =
(E[X])^{\bigwedge_{\TT[X]}} = E^{\wedge_{\TT}}[X]\,,$$ and hence
${\bigwedge_{\TT[X]}}$ is a strict extension of $\wedge_{\TT}$. By
the minimality of $\boldsymbol{\curlywedge}^{\wedge_{\TT}}$ and
the maxi\-mality of $\blacktriangle^{\wedge_{\TT}}$, we
immediately obtain that
$\boldsymbol{\curlywedge}^{\wedge_{\TT}}\leq
{{\bigwedge}}_{\TT[X]} \leq \blacktriangle^{\wedge_{\TT}}$.
\end{proof}


\begin{remark} \rm
Given a semistar operation $\star$ of an integral domain $D$ which
is not a field, let $\TT := \{D_Q \mid Q \in \QMax^{\stf}(D)\}$.
Then ${\wedge_{\TT}} = \widetilde{\star}$ and
${\bigwedge_{\TT[X]}}=[\stt]$,  and hence by Theorem
\ref{ok-stable}, we have
$\boldsymbol{\curlywedge}^{\wedge_{\TT}}=\boldsymbol{\curlywedge}^{\stt}=[\stt]={\bigwedge_{\TT[X]}}
\lneq  \blacktriangle^{\stt} =\blacktriangle^{\wedge_{\TT}} $.
 For the general case, i.e., for an arbitrary family of overrings $\TT$,
 it would be interesting to know under which conditions $\boldsymbol{\curlywedge}^{\wedge_{\TT}}$ coincides with
$ {{\bigwedge}}_{\TT[X]}$.
\end{remark}

\smallskip

For a given semistar operation $\star$, we investigate the
relationship among the following semistar operations:
\begin{center}
$ \boldsymbol{\curlywedge}^{\star_a} \,,\;\;\;
\blacktriangle^{\star_a}\,,\;\;\;
(\boldsymbol{\curlywedge}^{\star})_a\,,\;\;\;
(\blacktriangle^{\star})_a\,,\;\;\;  {\bigwedge_{\VV(\star)[X]}} $
\end{center}
where $\VV(\star)$ is the family of all $\star$--valuation
overrings of $D$. Recall that, when $\star=d_D$
and  $\VV := \VV(d_D)$ is the family of all valuation overrings of $D$, then
 $ (d_D)_a =b_D = \bigwedge_{\VV}$.  Set $[b_D] :=  \bigwedge_{\VV[X]}$.
 Then, from Proposition \ref{prop23}, we immediately deduce that:

\begin{corollary} \label{cor25} For any integral domain $D$,
 $$ \boldsymbol{\curlywedge}^{b_D} \leq [b_D]\leq \blacktriangle^{b_D}\,.$$
\end{corollary}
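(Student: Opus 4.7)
The plan is to observe that Corollary \ref{cor25} is a direct specialization of Proposition \ref{prop23} to the family $\TT := \VV$ of all valuation overrings of $D$, so no further work beyond unpacking definitions should be required.

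First I would recall the identifications given just before the statement: by definition $b_D = (d_D)_a = \wedge_{\VV}$, where $\VV = \VV(d_D)$ is the family of all valuation overrings of $D$, and $[b_D]$ is defined as $\bigwedge_{\VV[X]}$. So if I take $\TT := \VV$ in the setup preceding Proposition \ref{prop23}, the semistar operation $\wedge_{\TT}$ of $D$ becomes exactly $b_D$, while the semistar operation $\bigwedge_{\TT[X]}$ of $D[X]$ becomes exactly $[b_D]$.

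Next I would plug these identifications into Proposition \ref{prop23}, which asserts
$$\boldsymbol{\curlywedge}^{\wedge_{\TT}}  \leq  {\bigwedge}_{\TT[X]} \leq \blacktriangle^{\wedge_{\TT}}.$$
Substituting $\TT = \VV$ converts the outer operations into $\boldsymbol{\curlywedge}^{b_D}$ and $\blacktriangle^{b_D}$ and the middle one into $[b_D]$, giving exactly the chain in the statement.

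There is no real obstacle here; the only thing to check is that $\VV$ is indeed a family of overrings of $D$ (which is automatic) so that Proposition \ref{prop23} applies without modification. Hence the proof amounts to a one-line citation of Proposition \ref{prop23} together with the above substitution.
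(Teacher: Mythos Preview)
Your proposal is correct and matches the paper's own argument exactly: the corollary is stated immediately after the identifications $b_D = \wedge_{\VV}$ and $[b_D] := \bigwedge_{\VV[X]}$, and the paper simply says it follows at once from Proposition~\ref{prop23}. There is nothing to add.
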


\smallskip

For tackling the general question, we  need the following lemma.

\begin{lemma} \label{star-a}
Let $\star$ be a semistar operation of $D$ and let $\bigstar$ be a
strict extension of $\star$ to $D[X]$. Then $\bigstar_a$ is a
strict extension of $\star_a$.
\end{lemma}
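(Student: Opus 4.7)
The approach is to verify $(E[X])^{\bigstar_a} = E^{\star_a}[X]$ for every $E \in \FF(D)$ directly, using the defining formula $G^{\star_a} = \bigcup\{((GH)^\star : H^\star) \mid H \in \f(D)\}$ (for $G \in \f(D)$), the hypothesis of strictness $(G[X])^{\bigstar} = G^\star[X]$, and the Dedekind--Mertens Lemma. The easier direction $E^{\star_a}[X] \subseteq (E[X])^{\bigstar_a}$ will come from simply transplanting the defining formula from $D$ to $D[X]$; the reverse inclusion will require extracting a polynomial content out of an a priori rational function.

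For $E^{\star_a}[X] \subseteq (E[X])^{\bigstar_a}$: given $x \in E^{\star_a}$, there exist $G, H \in \f(D)$ with $G \subseteq E$ and $xH^\star \subseteq (GH)^\star$. Strictness of $\bigstar$ gives $H^\star[X] = (H[X])^{\bigstar}$ and $(GH)^\star[X] = (G[X]\cdot H[X])^{\bigstar}$, hence $x \in ((G[X]\cdot H[X])^{\bigstar} : (H[X])^{\bigstar}) \subseteq (G[X])^{\bigstar_a} \subseteq (E[X])^{\bigstar_a}$. Since $(E[X])^{\bigstar_a}$ is a $D[X]$-submodule of $K(X)$, it contains every polynomial with coefficients in $E^{\star_a}$.

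For the reverse inclusion I take $p \in (E[X])^{\bigstar_a}$; since $\bigstar_a$ is of finite type, there exist $A, B \in \f(D[X])$ with $A \subseteq E[X]$ and $pB^{\bigstar} \subseteq (AB)^{\bigstar}$. The set $((AB)^{\bigstar} : B^{\bigstar})$ is invariant under rescaling $B$ by any nonzero element of $K(X)$, so I first clear denominators to assume $B \subseteq D[X]$ and then divide out by $\gcd_{K[X]}(B)$ to further assume $B \subseteq K[X]$ has generators with gcd $1$ in $K[X]$. The chain $pB \subseteq (AB)^{\bigstar} \subseteq (\co_D(A)\co_D(B))^\star[X] \subseteq K[X]$ (from strictness together with $AB \subseteq (\co_D(A)\co_D(B))[X]$), combined with a Bezout identity $\sum u_i g_i = 1$ in $K[X]$ furnished by the gcd assumption, then forces $p = \sum u_i(pg_i) \in K[X]$. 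This upgrade from rational function to genuine polynomial is the subtlest step and is precisely where I expect the main obstacle to lie.

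With $p \in K[X]$ in hand, Dedekind--Mertens yields $\co_D(p)^{N+1}\co_D(B) = \co_D(p)^N\co_D(pB)$, where $N := \max_i \deg g_i$ over generators $g_i$ of $B$. Since $\co_D(pB) \subseteq (\co_D(A)\co_D(B))^\star$ (inherited from $pB \subseteq (\co_D(A)\co_D(B))^\star[X]$), setting $G := \co_D(A) \subseteq E$ and $H := \co_D(p)^N \co_D(B) \in \f(D)$ rewrites this as $\co_D(p)\cdot H^\star \subseteq (GH)^\star$, using the general identity $(EF^\star)^\star = (EF)^\star$. Hence $\co_D(p) \subseteq ((GH)^\star : H^\star) \subseteq G^{\star_a} \subseteq E^{\star_a}$, so $p \in E^{\star_a}[X]$, as required. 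Once the gcd-rescaling has placed $p$ in $K[X]$, the Dedekind--Mertens computation plugs into the defining formula for $\star_a$ almost verbatim.
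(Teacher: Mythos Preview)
Your proof is correct and follows the same overall strategy as the paper: first show the element lies in $K[X]$ (using that $K[X]$ is a PID), then apply Dedekind--Mertens to force its content into $E^{\star_a}$. The execution differs in two minor but noteworthy respects.

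For the step $p\in K[X]$, the paper argues directly that $((E[X]H)^{\bigstar}:H^{\bigstar})\subseteq K[X]$ for any $H\in\f(D[X])$ with $H\subseteq D[X]$: since $HK[X]=hK[X]$ for some $h$, one gets $((E[X]H)^{\bigstar}:H^{\bigstar})\subseteq ((K[X])^{\bigstar}:(K[X])^{\bigstar})=(K^{\star}[X]:K^{\star}[X])=K[X]$, with no rescaling of $H$ needed. Your B\'ezout argument is an equivalent use of the PID property, at the cost of the preliminary normalization of $B$.

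For the Dedekind--Mertens step, the paper applies the lemma in the form $\co_D(f)\co_D(g)^{m+1}=\co_D(fg)\co_D(g)^m$ with $m=\deg f$ and $g$ ranging over $H$; summing over $g$ then requires a separate combinatorial identity $\sum_{g\in H}\co_D(g)^{r}=\co_D(H)^{r}$ (proved by a degree-shift trick). By reversing the roles---putting the exponent on $\co_D(p)$ with $N=\max_i\deg g_i$---you make the sum over generators linear and bypass that identity entirely. This is a small but genuine simplification relative to the paper's argument.
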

\begin{proof} We start by proving some results of independent interest.

\noindent{\bf Claim 1.} {\sl If $H$ is a nonzero finitely
generated integral ideal of $D[X]$, then}
 $$\sum_{g\in H} \left(\co_D(g)\right)^r=
 \left(\sum_{g\in H} \co_D(g)\right)^{\!\! r} \; \mbox{ \sl for all }
r\geq 1\,.
$$

The inclusion $(\subseteq)$ is obvious. The opposite inclusion
$(\supseteq)$ follows from the observation that for an arbitrary choice of $g_1, g_2,
\cdots, g_m\in H$, $\co_D(g_1)+\co_D(g_2)+\cdots +\co_D(g_m)=\co_D(g)$ for some
$g\in H$ (we can put $g:=g_1+X^{\text{deg}(g_1)+1}g_2+
X^{\text{deg}(g_1)+\text{deg}(g_2)+2}g_3+ \cdots
+X^{\text{deg}(g_1)+\text{deg}(g_2)+\cdots
+\text{deg}(g_{m-1})+m-1}g_m$).


\noindent{\bf Claim 2.} {\sl If $E$ and $H$ are nonzero finitely
generated integral ideals of $D$  and $D[X]$, respectively,
then
$$((E[X]H)^{\bigstar}: H^{\bigstar})\subseteq E^{\star_a}[X].$$}

Note first  that  $((E[X]H)^{\bigstar}:
H^{\bigstar})\subseteq K[X]$.  Indeed, since  $HK[X]=hK[X]$
for some $h\in K[X]$, we have
$$
\begin{array}{rl}
((E[X]H)^{\bigstar}: H^{\bigstar})\subseteq &
((E[X]HK[X])^{\bigstar}: (HK[X])^{\bigstar}) \subseteq
((HK[X])^{\bigstar}: (HK[X])^{\bigstar})  \\ = & ((hK[X])^{\bigstar}:
(hK[X])^{\bigstar}) = ((K[X])^{\bigstar}: (K[X])^{\bigstar}) \\
= &(K^{\star}[X]:K^{\star}[X])\\ =&(K[X]:K[X])=K[X]\,.
\end{array}
$$

Let $f\in ((E[X]H)^{\bigstar}: H^{\bigstar}) \subseteq K[X]$. Then,
$$
fH\subseteq (E[X]H)^{\bigstar}\subseteq
(E[X]\co_D(H)[X])^{\bigstar}=(E\co_D(H))^{\star}[X].
$$
Let $m:=\text{deg}(f)$ and let $g\in H$. Then, by the previous
observation,
$$\co_D(f)\co_D(g)^{m+1}=\co_D(fg)\co_D(g)^m\subseteq
(E\co_D(H))^{\star}\co_D(H)^m\subseteq (E\co_D(H)^{m+1})^{\star},$$
and
so $\co_D(f)(\sum_{g\in H} \co_D(g)^{m+1})\subseteq
(E\co_D(H)^{m+1})^{\star}$.   By Claim 1, we deduce that
$$\co_D(f)\co_D(H)^{m+1}\subseteq (E\co_D(H)^{m+1})^{\star}.$$
Therefore, $\co_D(f)\subseteq ((E\co_D(H)^{m+1})^{\star}:
(\co_D(H)^{m+1})^{\star})$. Since $\co_D(H)^{m+1}$ is a finitely
generated ideal of $D$, $((E\co_D(H)^{m+1})^{\star}:
(\co_D(H)^{m+1})^{\star})\subseteq E^{\star_a}$. Thus we deduce
that $f\in E^{\star_a}[X]$.

\smallskip

From Claim 2, it easily follows that for each $E\in \FF(D)$,
$(E[X])^{\bigstar_a}\subseteq E^{\star_a}[X]$. Since the opposite
inclusion is obvious, the proof is completed.
\end{proof}

\begin{proposition}
Let $\star$ be a semistar operation of $D$. Then
$$\boldsymbol{\curlywedge}^{\star_a}\leq
(\boldsymbol{\curlywedge}^{\star_a})_a \leq
(\boldsymbol{\curlywedge}^{\star})_a \leq
(\blacktriangle^{\star})_a \leq (\blacktriangle^{\star_a})_a \lneq
\blacktriangle^{\star_a}.$$
\end{proposition}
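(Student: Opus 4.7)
The plan is to establish the chain in order, leaning on two preliminary facts. First, $(\cdot)_a$ is monotone and idempotent on semistar operations: monotonicity follows from the identity $\star_a=\wedge_{\calbV(\star)}$ together with the elementary observation that $\star'\leq \star''$ forces $\calbV(\star')\supseteq \calbV(\star'')$, so the intersection defining $\star_a$ shrinks as $\star$ grows; idempotence holds because $\star_a$ is always \texttt{eab} of finite type, whence $(\star_a)_a=\star_a$ by \cite[Proposition 4.5(5)]{FL1}. Second, every finite-type semistar operation $\star'$ of $D[X]$ satisfies $\star'\leq \star'_a$: taking $H=D[X]$ in the definition of $\star'_a$ on $\boldsymbol{f}(D[X])$ gives $(B^{\star'}:D[X]^{\star'})=B^{\star'}$ (because $B^{\star'}$ is a $D[X]^{\star'}$-module), so $B^{\star'}\subseteq B^{\star'_a}$ for $B\in\boldsymbol{f}(D[X])$, and this passes to all of $\FF(D[X])$ by finite-type.

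For the first inequality I note that $\star_a$ is of finite type, hence by Proposition~\ref{curlyft}(1) (applied with $\star_a$ playing the role of $\star$, so that $(\star_a)_{\!_f}=\star_a$) the operation $\boldsymbol{\curlywedge}^{\star_a}$ is itself of finite type; the second preliminary then yields $\boldsymbol{\curlywedge}^{\star_a}\leq (\boldsymbol{\curlywedge}^{\star_a})_a$. For the second inequality, Lemma~\ref{star-a} applied to the strict extension $\boldsymbol{\curlywedge}^\star$ of $\star$ shows that $(\boldsymbol{\curlywedge}^\star)_a$ is a strict extension of $\star_a$, so the minimality of $\boldsymbol{\curlywedge}^{\star_a}$ (Proposition~\ref{curly}) gives $\boldsymbol{\curlywedge}^{\star_a}\leq(\boldsymbol{\curlywedge}^\star)_a$; applying the monotone idempotent $(\cdot)_a$ to both sides then produces $(\boldsymbol{\curlywedge}^{\star_a})_a\leq ((\boldsymbol{\curlywedge}^\star)_a)_a=(\boldsymbol{\curlywedge}^\star)_a$.

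The third inequality is immediate from $\boldsymbol{\curlywedge}^\star\leq \blacktriangle^\star$ (smallest versus largest strict extension, by Proposition~\ref{curly} and Theorem~\ref{strictext}(4)) combined with the monotonicity of $(\cdot)_a$. For the fourth, Lemma~\ref{star-a} again yields that $(\blacktriangle^\star)_a$ is a strict extension of $\star_a$, so the maximality of $\blacktriangle^{\star_a}$ forces $(\blacktriangle^\star)_a\leq\blacktriangle^{\star_a}$; applying $(\cdot)_a$ once more and invoking idempotence gives $(\blacktriangle^\star)_a=((\blacktriangle^\star)_a)_a\leq(\blacktriangle^{\star_a})_a$.

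Finally, for the strict inequality $(\blacktriangle^{\star_a})_a\lneq \blacktriangle^{\star_a}$, the weak part holds because $(\blacktriangle^{\star_a})_a$ is a strict extension of $(\star_a)_a=\star_a$ and $\blacktriangle^{\star_a}$ is the largest such by Theorem~\ref{strictext}(4), while strictness follows from the contrast that $(\blacktriangle^{\star_a})_a$ is of finite type by construction but Proposition~\ref{prop10}, applied to the semistar operation $\star_a$, guarantees that $\blacktriangle^{\star_a}$ is not. The one point requiring vigilance is that in general $\star$ need not be below $\star_a$, so none of the inequalities in the chain may be obtained by simply applying monotonicity of $\boldsymbol{\curlywedge}^{(\cdot)}$ or $\blacktriangle^{(\cdot)}$ to a putative relation between $\star$ and $\star_a$; instead one must route each step through Lemma~\ref{star-a}, which is precisely the mechanism that aligns the \texttt{eab} closure with the minimality/maximality characterizations of strict extensions.
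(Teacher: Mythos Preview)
Your argument is correct and follows essentially the same route as the paper's proof: both hinge on Lemma~\ref{star-a} to obtain $\boldsymbol{\curlywedge}^{\star_a}\leq(\boldsymbol{\curlywedge}^{\star})_a$ and $(\blacktriangle^{\star})_a\leq\blacktriangle^{\star_a}$ via the minimality/maximality characterizations, then push these through the monotone idempotent $(\cdot)_a$, and invoke Proposition~\ref{prop10} for the final strict inequality. Your write-up is simply more explicit about the standard preliminaries (monotonicity and idempotence of $(\cdot)_a$, and why finite type gives $\star'\leq\star'_a$), which the paper takes for granted.
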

\begin{proof} By Lemma \ref{star-a}, we have
$\boldsymbol{\curlywedge}^{\star_a}\leq
(\boldsymbol{\curlywedge}^{\star})_a$, and  since
$\boldsymbol{\curlywedge}^{\star_a}$ is of finite type,
$\boldsymbol{\curlywedge}^{\star_a}\leq
(\boldsymbol{\curlywedge}^{\star_a})_a$. Also, from the first
inequality, we have $(\boldsymbol{\curlywedge}^{\star_a})_a\leq
((\boldsymbol{\curlywedge}^{\star})_a)_a=(\boldsymbol{\curlywedge}^{\star})_a$.
Thus, we get $\boldsymbol{\curlywedge}^{\star_a}\leq
(\boldsymbol{\curlywedge}^{\star_a})_a \leq
(\boldsymbol{\curlywedge}^{\star})_a$. Similarly,
$(\blacktriangle^{\star})_a \lneq \blacktriangle^{\star_a}$ and
$(\blacktriangle^{\star_a})_a \lneq \blacktriangle^{\star_a}$,
where the strict inequalities follow from Proposition
\ref{prop10}. Moreover, from the first inequality, we also have
$(\blacktriangle^{\star})_a=((\blacktriangle^{\star})_a)_a \leq
(\blacktriangle^{\star_a})_a$. Thus, we get
$(\blacktriangle^{\star})_a \leq (\blacktriangle^{\star_a})_a
\lneq \blacktriangle^{\star_a}$. Finally, since
$\boldsymbol{\curlywedge}^{\star}\leq \blacktriangle^{\star}$,
obviously we have $(\boldsymbol{\curlywedge}^{\star})_a \leq
(\blacktriangle^{\star})_a$.
\end{proof}

\begin{remark}\label{remark5.7} \rm
(a)\, It  can happen  that
$\boldsymbol{\curlywedge}^{\star_a}\lneq
(\boldsymbol{\curlywedge}^{\star_a})_a$, i.e., in general,
$\boldsymbol{\curlywedge}^{\star_a}$ is not an \texttt{eab}
semistar operation. For instance, let $D$ be a Pr\"{u}fer domain,
not a field, and let $\star=d_D = b_D$. Then
$\boldsymbol{\curlywedge}^{\star_a} =
\boldsymbol{\curlywedge}^{b_D}=
\boldsymbol{\curlywedge}^{d_D}=d_{D[X]}\neq b_{D[X]}= (d_{D[X]})_a
= (\boldsymbol{\curlywedge}^{\star_a})_a$.

(b)\, It can happen that $(\boldsymbol{\curlywedge}^{\star})_a
\lneq (\blacktriangle^{\star})_a$. For instance, if $D$ is not a
field, then
$b_{D[X]}=(d_{D[X]})_a=(\boldsymbol{\curlywedge}^{d_D})_a\lneq
(\blacktriangle^{d_D})_a$. Indeed, let $\alpha$ be a nonzero
nonunit element of $D$ and let $A:=(\alpha, X)D[X]$. Since $A$ is
a finitely generated integral ideal of $D[X]$,
$A^{(\blacktriangle^{d_D})_a}\supseteq
A^{\blacktriangle^{d_D}}=D[X]$ (see Remark \ref{rem-ex}(c)). On
the other hand, recall that $A^{b_{D[X]}}=\bigcap AW$, where $W$
ranges over the valuation overrings of $D[X]$, and hence that
$A^{b_{D[X]}}\subseteq
(D[X])^{b_{D[X]}}=D^{b_D}[X]=\overline{D}[X]$, where
$\overline{D}$ is the integral closure of $D$. Let $N$ be a
maximal ideal of $\overline{D}$ containing $\alpha$. Then, $N+(X)$
is a prime ideal of $\overline{D}[X]$. By \cite[Theorem 19.6]{G},
there exists a valuation overring $W$ of $\overline{D}[X]$ such
that $N+(X)$ is the center of $W$ on $\overline{D}[X]$. Hence
$A^{b_{D[X]}}\subseteq AW\cap \overline{D}[X]\subseteq
(N+(X))W\cap \overline{D}[X]=N+(X)$. Therefore, $b_{D[X]}\neq
(\blacktriangle^{d_D})_a$.

(c)\, If $\star=\star_f$, then
$(\boldsymbol{\curlywedge}^{\star_a})_a=
(\boldsymbol{\curlywedge}^{\star})_a$. Because, if
$\star=\star_f$, then $\star\leq \star_a$ and hence
$\boldsymbol{\curlywedge}^{\star}\leq
\boldsymbol{\curlywedge}^{\star_a}$. Consequently,
$(\boldsymbol{\curlywedge}^{\star})_a\leq
(\boldsymbol{\curlywedge}^{\star_a})_a$ and hence the equality
holds. However, we do not know whether it is possible in general
that $(\boldsymbol{\curlywedge}^{\star_a})_a \lneq
(\boldsymbol{\curlywedge}^{\star})_a$. This problem is related
with the inequality $\boldsymbol{\curlywedge}^{\star_f} \leq
(\boldsymbol{\curlywedge}^{\star})_f$. If
$\boldsymbol{\curlywedge}^{\star_f} =
(\boldsymbol{\curlywedge}^{\star})_f$, then we have
$(\boldsymbol{\curlywedge}^{\star_a})_a
=(\boldsymbol{\curlywedge}^{(\star_f)_a})_a=
(\boldsymbol{\curlywedge}^{\star_f})_a
=((\boldsymbol{\curlywedge}^{\star})_f)_a=
(\boldsymbol{\curlywedge}^{\star})_a$. As mentioned in
Remark~\ref{remark4.8}, we do not know whether the equality
$\boldsymbol{\curlywedge}^{\star_f} =
(\boldsymbol{\curlywedge}^{\star})_f$ holds or not.

(d)\, Without much difficulty, we can show that the set of
$\blacktriangle^{\star_a}$-valuation overrings of $D[X]$ is the
set $\{K[X]_{(f)}\mid f \mbox{ is an irreducible polynomial of }
K[X]\} \, \bigcup \, \{ V(X)\mid V \mbox{ is a $\star$-valuation
 overring of }$ $ D \}$. However, we do not have any information about the
$\blacktriangle^{\star}$-valuation overrings of $D[X]$, and thus
we do not know whether it is possible that
$(\blacktriangle^{\star})_a \lneq (\blacktriangle^{\star_a})_a$.

\end{remark}

\begin{corollary}
 Let $D$ be an integral domain with quotient field $K$. Then,
$$\boldsymbol{\curlywedge}^{b_D}\leq [b_D] \leq b_{D[X]}=
(\boldsymbol{\curlywedge}^{b_D})_a = [b_D]_a \leq
(\blacktriangle^{b_D})_a\lneq \blacktriangle^{b_D}.$$ Moreover,
$b_{D[X]}=(\blacktriangle^{b_D})_a$ if and only if $D=K$.
\end{corollary}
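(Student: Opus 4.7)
The plan is to string the entire chain together from prior results plus two small observations. The central facts I would use are $b_D=(d_D)_a$ and $\boldsymbol{\curlywedge}^{d_D}=d_{D[X]}$ (the latter is Lemma~\ref{lemma22} with $T=D$); consequently $b_{D[X]}=(d_{D[X]})_a$ is \texttt{eab} and of finite type, so $(b_{D[X]})_a=b_{D[X]}$. Monotonicity of the $_a$-construction (which follows from $\star_a=\wedge_{\calbV(\star)}$ together with $\calbV(\star_1)\supseteq\calbV(\star_2)$ whenever $\star_1\le\star_2$) will be used repeatedly.

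The leftmost inequalities $\boldsymbol{\curlywedge}^{b_D}\leq[b_D]$ and $[b_D]\leq\blacktriangle^{b_D}$ are exactly Corollary~\ref{cor25}. To wedge $b_{D[X]}$ between $[b_D]$ and $(\blacktriangle^{b_D})_a$, the only genuinely new step is $[b_D]\leq b_{D[X]}$: given any valuation overring $W$ of $D[X]$, the ring $V:=W\cap K$ is a valuation overring of $D$, and since $D\subseteq V\subseteq W$ and $X\in W$, we get $V[X]\subseteq W$. Hence $AV[X]\subseteq AW$ for every $A\in\FF(D[X])$, and intersecting on the left over all valuation overrings of $D$ and on the right over all valuation overrings of $D[X]$ yields $A^{[b_D]}=\bigcap_V AV[X]\subseteq\bigcap_W AW=A^{b_{D[X]}}$.

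For the central equalities $b_{D[X]}=(\boldsymbol{\curlywedge}^{b_D})_a=[b_D]_a$ I would argue by a sandwich. Apply the preceding proposition with $\star=d_D$ to obtain $(\boldsymbol{\curlywedge}^{b_D})_a\leq(\boldsymbol{\curlywedge}^{d_D})_a=(d_{D[X]})_a=b_{D[X]}$. For the reverse, $d_D\leq b_D$ and Lemma~\ref{leq}(2) give $d_{D[X]}=\boldsymbol{\curlywedge}^{d_D}\leq\boldsymbol{\curlywedge}^{b_D}$, so $b_{D[X]}=(d_{D[X]})_a\leq(\boldsymbol{\curlywedge}^{b_D})_a$. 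The identity $[b_D]_a=b_{D[X]}$ is then immediate: apply $_a$ to $\boldsymbol{\curlywedge}^{b_D}\leq[b_D]\leq b_{D[X]}$ and use $(b_{D[X]})_a=b_{D[X]}$. The remaining inequality $[b_D]_a\leq(\blacktriangle^{b_D})_a$ is $_a$ applied to Corollary~\ref{cor25}, and the final strict inequality $(\blacktriangle^{b_D})_a\lneq\blacktriangle^{b_D}$ is exactly part of the preceding proposition.

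For the biconditional, the ``only if'' direction uses Remark~\ref{remark5.7}(b): when $D\neq K$ we have $b_{D[X]}=(d_{D[X]})_a\lneq(\blacktriangle^{d_D})_a$, and combining this with $d_D\leq b_D$ (whence $(\blacktriangle^{d_D})_a\leq(\blacktriangle^{b_D})_a$) forces $b_{D[X]}\lneq(\blacktriangle^{b_D})_a$. The ``if'' direction uses Remark~\ref{rem-ex}(c): if $D=K$ then $b_K=d_K$, so $\blacktriangle^{b_K}=\blacktriangle^{d_K}$, and $(\blacktriangle^{d_K})_f=d_{K[X]}$; since $\star_a=(\star_f)_a$, we get $(\blacktriangle^{b_K})_a=((\blacktriangle^{d_K})_f)_a=(d_{K[X]})_a=b_{K[X]}$. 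There is no serious obstacle in the argument: the entire content is bookkeeping among $\boldsymbol{\curlywedge}$, $[\cdot]$, $_a$, and $_f$, with the one-line pullback-of-valuation-overrings argument for $[b_D]\leq b_{D[X]}$ being the only new computation.
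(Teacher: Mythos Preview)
Your proof is correct and follows essentially the same route as the paper's own argument: the pullback $V=W\cap K$ for $[b_D]\leq b_{D[X]}$, the use of Remark~\ref{remark5.7}(b) and Remark~\ref{rem-ex}(c) for the biconditional, and the chain from the preceding proposition are all identical in spirit. The one noteworthy difference is that the paper obtains $[b_D]_a=b_{D[X]}$ by citing an external result (\cite[Proposition 15]{CF2011}), whereas you recover it internally via the sandwich $\boldsymbol{\curlywedge}^{b_D}\leq[b_D]\leq b_{D[X]}$ together with $(\boldsymbol{\curlywedge}^{b_D})_a=b_{D[X]}=(b_{D[X]})_a$; this makes your version self-contained at no extra cost.
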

\begin{proof}
By Remark~\ref{remark5.7}(c),
$b_{D[X]}=(d_{D[X]})_a=(\boldsymbol{\curlywedge}^{d_D})_a
=(\boldsymbol{\curlywedge}^{b_D})_a$, and by \cite[Proposition
15]{CF2011}, $b_{D[X]} = [b_D]_a$. In order to show that
$[b_D]\leq b_{D[X]}$, let $E\in \FF(D[X])$ and let $W$ be a
valuation overring of $D[X]$. Then $V:=W\cap K$ is a valuation
overring of $D$, and hence $EW\supseteq EV[X]\supseteq E^{[b_D]}$.
Therefore, $E^{b_{D[X]}}\supseteq E^{[b_D]}$.

If $D=K$, then by Remark~\ref{rem-ex}(c),
$d_{D[X]}=(\blacktriangle^{d_D})_f$, and hence
$b_{D[X]}=(d_{D[X]})_a=((\blacktriangle^{d_D})_f)_a=
(\blacktriangle^{d_D})_a=(\blacktriangle^{b_D})_a$. If $D\neq K$,
then $b_{D[X]}\lneq (\blacktriangle^{d_D})_a$ (see
Remark~\ref{remark5.7}(b)). Since $(\blacktriangle^{d_D})_a \leq
(\blacktriangle^{b_D})_a$, it immediately follows that
$b_{D[X]}\lneq (\blacktriangle^{b_D})_a$.
\end{proof}

\begin{remark}\rm
It can happen that $[b_D]\lneq b_{D[X]}$. For instance, let $D$ be
a Pr\"{u}fer domain which is not a field. Then
$[b_D]=[d_D]=d_{D[X]}\lneq b_{D[X]}$.
\end{remark}

\bigskip



\end{document}